\numberwithin{equation}{section}
\newtheorem{theorem}{Theorem}[section]
\newtheorem{lemma}[theorem]{Lemma}
\newtheorem{proposition}[theorem]{Proposition}
\newtheorem{remark}[theorem]{Remark}
\newcommand{\R}{{\mathbb R}}
\renewcommand{\S}{{\mathbb S}}
\newcommand{\eps}{\varepsilon}
\renewcommand{\epsilon}{\varepsilon}
\renewcommand{\O}{{\mathcal  O}}
\renewcommand{\theta}{{\vartheta}}
\renewcommand{\H}{{\mathcal H}}
\renewcommand{\rightarrow}{\to}
\newcommand{\ud}{\mathrm{d}}
\newcommand{\N}{\mathbb{N}}
\newcommand{\dive}{\mathrm{div}}
\title[Fractional problems with vanishing potentials]{Critical and subcritical fractional \\ problems with vanishing potentials}
\author[J.M.\ do \'O]{Jo\~ao Marcos do \'O}
\author[O.H.\ Miyagaki]{Ol\'{i}mpio H.\ Miyagaki}
\author[M. Squassina]{Marco Squassina}
\address[J.M. do \'O]{\newline\indent Department of Mathematics
\newline\indent 
Federal University of Para\'{\i}ba
\newline\indent
58051-900, Jo\~ao Pessoa-PB, Brazil}
\email{\href{mailto:jmbo@pq.cnpq.br}{jmbo@pq.cnpq.br}}
\address[O.\ Miyagaki]{\newline\indent Department of Mathematics 
\newline\indent 
 Federal University of Juiz de Fora
\newline\indent 
36036-330  Juiz de Fora, Minas Gerais, Brazil}
\email{\href{mailto:ohmiyagaki@gmail.com}{ohmiyagaki@gmail.com}}
\address[M.\ Squassina]{\newline\indent Dipartimento di Informatica \newline\indent
Universit\`a degli Studi di Verona
\newline\indent
C\'a Vignal 2, Strada Le Grazie 15, I-37134 Verona, Italy}
\email{\href{mailto:marco.squassina@univr.it}{marco.squassina@univr.it}}
\thanks{Research supported in part by INCTmat/MCT/Brazil, J.M.\ do \'O was partially supported by CNPq, M.\ Squassina was supported by
MIUR project Variational and Topological Methods in the Study of Nonlinear Phenomena, O.H.\ Miyagaki
 was partially supported by CNPq/Brazil and  CAPES/Brazil (Proc 2531/14-3)}
 \thanks{The paper was completed while the second author was visiting the
  Department of Mathematics of Rutgers University, whose hospitality he gratefully
 acknowledges. He would also like to express his gratitude to Prof.\ Haim Brezis.}
\subjclass[2000]{35P15, 35P30, 35R11}
\keywords{Fractional laplacian, nonautonomous equations, vanishing potentials, variational methods}
\begin{document}

\begin{abstract}
We investigate a class of nonlinear nonautonomous scalar field equations with fractional diffusion,
critical power nonlinearity and a subcritical term. The involved potentials are allowed for vanishing
behavior at infinity. The problem is set on the whole space and compactness issues have to be tackled.
\end{abstract}
\maketitle

%%%%%%%%%%%%%%%%%%%%%%%%%%%%%%%%%%%%%%

%\bigskip
%\begin{center}
%\begin{minipage}{8cm}
%\footnotesize
%\tableofcontents
%\end{minipage}
%\end{center}

%%%%%%%%%%%%%%%%%%%%%%%%%%%%%%%%%%%%%%
%\bigskip

\section{Introduction and main results}
\noindent
We consider existence of solutions for the following class of equations 
\begin{equation}
\label{PS}
(-\Delta)^{\frac{s}{2}} u + V(x)u = K(x)f(u) + \lambda |u|^{2^{*}_{s}-2}u \quad\,\, \text{in $\R^N$}.
\end{equation}
Here $ \lambda\geq 0$ is a parameter,  $s \in (0,2),$ $2^{*}_{s}=2N/(N-s)$, $N>s$, $(-\Delta)^{\frac{s}{2}} $ is fractional laplacian 
$V,K$ are positive functions and $f$ is a continuous function with quasicritical growth.
Recently, a great attention has been focused on the study nonlinear problems involving fractional  laplacian,
 in view of real-world applications. For instance, this type of operators arise in thin obstacle problems, optimization, finance, phase
transitions, stratified materials, anomalous diffusion, crystal dislocation, soft thin films, semipermeable membranes, flame
propagation, conservation laws, ultra-relativistic limits of quantum mechanics, quasi-geostrophic flows, multiple scattering,
minimal surfaces, materials science and water waves, see \cite{nezza}. 
The fractional  laplacian $(-\Delta)^{\frac{s}{2}}$ with $s \in (0,2)$ of a function $\phi:\R^N \to \R$ is  defined  by  
$$
{\mathcal F}((-\Delta)^{\frac{s}{2}}\phi)(\xi)=|\xi|^{s}{\mathcal F}(\phi)(\xi),
 \quad\text{for $s \in(0,2)$},
 $$ 
 where ${\mathcal F}$ is the Fourier transform.
We are going explore problem \eqref{PS} with {\it zero mass} potential, that is when 
$V(x)\to 0$, as $|x|\to\infty$. This class was studied by several researchers in the local case $s=2$, e.g.\
in \cite{AS2012,AW,GM,BGM,BL,BPR,LW,Montenegro} and reference therein, where the main feature is to impose restrictions on $V,K$ to get  
some compact embedding into a weighted $L^p$ space. Recently 
Alves and Souto, in \cite{AS}, in addition to  improving all the former restrictions on the potentials, handled subcritical nonlinearities
$f$ which do not satisfy the so-called Ambrosetti-Rabinowitz condition, namely, 
\begin{equation}
\text{there exists $\theta\in (2,2^*_s)$ with
           $0<\theta F(s)\leq sf(s)$ for all $s>0$, \,\,\, $F(s)=\int_{0}^{s}f(t)dt$.}
\tag{AR}
\end{equation}
Conditions weaker than (AR) were used, first time,  in \cite{costa-1,jean,zou,LW,LWa}.
In all the above cited papers, the nonlinearity $f$ had subcritical growth, that is, in addition to $ \lambda =0,$ the growth of $f$ in 
comparable with $s^p$ with $p<(N+2)/(N-2)$, for $N \geq 3$. 
In the case $s\in (0,2)$, nonlocal case, we say that  $f$ has a subcritical growth, if the growth of $f$ in $s$ 
is comparable with $s^p$ for $p<(N+s)/(N-s)$, with $N>s$. 
In this situation, we would like to mention two works, one by Chang and  Wang \cite{CW}, 
where the authors recovered the  Berestycki and Lions\cite{BL} results by improving Strauss compactness result \cite{St}, and
a paper by Secchi \cite{Simone}
where the existence of ground state solutions is established. 
Motivated by the papers above, we are going to study the nonlocal case, with nonlinearities involving a critical growth and
a subcritical perturbation $f$. 
Elliptic problems with critical growth, after the pioneering works by Brezis and Nirenberg\cite{BN} have had many progresses in several 
directions. We would like
to mention \cite{ambrosetti,willem} and the references therein, in local case. 
For nonlocal case, in bounded domain, we cite \cite{tanhalf, FW,servadei2,cabretan,Jin,barrios} and references therein, while in
 whole space was studied recently in \cite{SZY} for non vanishing potential. 
Recently, Caffarelli and Silvestre \cite{caffarelli} developed a local interpretation of the fractional
Laplacian given in $\R^N$ by considering a Neumann type operator in the extended domain
$\R^{N+1}_{+}$ defined by  $\{(x, t) \in \R^{N+1} : t > 0\}$. A similar extension, for nonlocal problems on bounded domain with
the zero Dirichlet boundary condition,  was established, for instance,  by Cabr\`e and Tan in 
\cite{cabretan}, Tan \cite{tan},  Capella, D\`avila, Dupaigne and Sire \cite{capela},  Br$\ddot{\mbox{a}}$ndle, 
Colorado, de Pablo and S\`anchez \cite{colorado}. It is worth noticing that, in a bounded domain, the Fourier definition
of the fractional laplacian and its local Caffarelli-Silvestre
interpretation do not agree, see the discussion developed  \cite{servadei} for more details.
 %We shall use, as in \cite{caffarelli}, the nonlocal feature of $(-\Delta)^{\frac{s}{2}}$, in whole space. 
For $u \in H^{s/2}(\R^N),$ the solution $w \in X^{s}(\R^{N+1}_{+})$ of  
 \begin{equation}
 \left\{ \begin{array}{rcl}
 -\dive ( y^{1-s}\nabla w)=0 & \mbox{in}&  \R^{N+1}_{+}\noindent\\
  w=u& \mbox{on}& \R^{N} \times\{0\}\noindent
 \end{array}\right. 
 \end{equation}
is called $s$-harmonic  extension $w=E_{s}(u)$ of $u$ and it is proved in \cite{caffarelli} (see also \cite{colorado}) that
$$
\lim_{y \to 0^+} y ^{1-s}\frac{\partial w}{\partial y}(x,y)=-\frac{1}{k_{s}}(-\Delta)^{\frac{s}{2}}u(x),
$$
where  
$$
k_{s}=2^{1-s}\Gamma(1-\frac{s}{2})/\Gamma (\frac{s}{2}).
$$ 
Here the spaces $X^{s}(\R^{N+1}_{+})$ and $H^{s/2}(\R^N)$
are defined as the completion of $C^{\infty}_{0}(\overline{\R^{N+1}_{+}})$ and  $C^{\infty}_{0}(\R^{N}),$
under the norms (which actually do coincide, see \cite[Lemma A.2]{colorado})
\begin{align*}
\|w\|_{X^{s}}:=&\Big(\int_{ \R^{N+1}_{+}}k_{s}y^{1-s}|\nabla w|^2 \ud x\ud y\Big)^{1/2},   \\
\|u\|_{H^{\frac{s}{2}}}:=&\Big(\int_{\R^N}|2\pi\xi|^{s}|\mathbb{F}(u(\xi))|^2 \ud \xi\Big)^{1/2}=\Big(\int_{\R^N}|(-\Delta)^{\frac{s}{2}}u|^2 \ud x\Big)^{1/2}.
\end{align*}
\noindent
Our problem \eqref{PS} will be studied in the  half-space, namely,
 \begin{equation}\label{NPS}
 \left\{ \begin{array}{rcl}
 -\dive ( y^{1-s}\nabla w)=0 & \mbox{in}&  \R^{N+1}_{+}\noindent\\
  -k_{s}\frac{\partial w}{\partial \nu}=-V(x)u +K(x)f(u) +  \lambda  |u|^{2^{*}_{s}-2}u & \mbox{on}& \R^{N} \times\{0\},\noindent
 \end{array}\right. 
 \end{equation}
where 
$$
\frac{\partial w}{\partial \nu}=\lim_{y \to 0^+} y ^{1-s}\frac{\partial w}{\partial y}(x,y).
$$ 
%where $N> 2s,$ $V,K:\R^N \to
%\R$ and $f:\R \to \R$ are continuous functions with $V, K$ being nonnegative function and $f$ having subcritical growth.
We are looking for a positive solution in the Hilbert space $E$ defined by
$$
E=\Big\{ w \in X^{s}(\R^{N+1}_{+}): \ \int_{\R^N}V(x)w(x,0)^2 \ud x < \infty\Big\}
$$
endowed with norm
$$
\|w\|:= \Big(\int_{ \R^{N+1}_{+}}k_{s}y^{1-s}|\nabla w|^2 \ud x\ud y + \int_{\R^N}V(x)w(x,0)^2 \ud x\Big)^{1/2}.
$$
Consider the Euler-Lagrange functional associated to \eqref{NPS}  given by
\begin{equation}\label{functional}
J_ \lambda (w):=\frac{1}{2}\|w\|^2 - \int_{\mathbb{R}^N} K(x) F(w(x,0))   \ud x  
- \frac{\lambda}{2^{*}_{s}}\int_{\mathbb{R}^N} w^+(x,0)^{2^{*}_{s}}   \ud x 
\end{equation}
which is $C^1$ with G\^ateaux derivative
\begin{align}
\label{derivative}
 J_ \lambda '(w)v&=\int_{ \R^{N+1}_{+}}k_{s}y^{1-s}\nabla w \cdot\nabla v \ud x\ud y + \int_{\R^N}V(x)w(x,0)v(x,0) \ud x \\
& - \int_{\mathbb{R}^N} K(x) f(w(x,0))v(x,0)  \ud x -  \lambda  \int_{\mathbb{R}^N} w^+(x,0)^{2^{*}_{s}-1} v(x,0) \ud x, 
\,\,\quad\text{for all $w,v\in E.$}   \notag
\end{align}
We now formulate assumptions for $V,K,f$ in problem~\eqref{PS}.
\vskip4pt
\noindent
$\bullet$ {\sc Assumptions on $V$ and $K$.} 
\begin{description}
\item[(I) (sign of $V$ and $K$)]  $V,K$ are continuous, $V,K>0$ on $\R^N$ and $K \in L^{\infty}(\R^N)$;
\item[(II) (decay of $K$)] If $\{A_n\}$ is a sequence of Borel sets of $\R^N$ with $|A_n|\leq R$ for some $R>0$, 
\begin{equation}
\label{decay-K}
\lim_{r\to \infty} \int_{A_n \cap B^{c}_{r}(0)} K(x)\ud x =0,\quad\text{uniformly with respect to $n \in \N$;}
\end{equation}
\item[(III) (interrelation between $V$ and $K$)] either 
\begin{equation}
\label{primaint}
K/V\in  L^{\infty}(\R^N)
\end{equation}
or  there exists $p \in (2,2^{*}_{s})$ such that 
\begin{equation}
\label{secondainter}
\lim_{|x|\to\infty}\frac{K(x)}{V(x)^\gamma}=0,
\qquad \gamma=\frac{ps-N(p-2)}{2s}\in (0,1).
\end{equation}
\end{description}
\vskip4pt
\noindent
$\bullet$ {\sc Assumptions on $f$.} 
\begin{description}
\item[(f1) (behaviour at zero)]  $f:\R\to\R^+$ is continuous with $f=0$ on $\R^-$.
If \eqref{primaint} holds, then 
$$
\limsup_{s\to 0^+}\frac{f(s)}{s}=0.
$$
If condition \eqref{secondainter} holds, we assume
$$
\limsup_{s\to 0^+}\frac{f(s)}{s^{p-1}}< +\infty.
$$
\item[(f2) (quasi-critical growth)] If \eqref{primaint} holds, then 
$$
\limsup_{s\to +\infty} \frac{f(s)}{s^{2^{*}_{s}-1}}=0.
$$
If condition \eqref{secondainter} holds, we assume
$$
\limsup_{s\to +\infty}\frac{F(s)}{s^{p}}< +\infty.
$$
\item[(f3) (super-quadraticity)]  $\frac{f(s)}{s}$ is non-decreasing in $\R^+$, and 
$$
\limsup_{s\to +\infty} \frac{F(s)}{s^{2}}=+\infty.
$$
\item[(f3)$'$ (super-quadraticity)]  $\frac{f(s)}{s}$ is non-decreasing in $\R^+$ and there exist $C_0>0$ and $q \in (2,2^{*}_{s})$ with
$$
F(s)\geq C_0 s^q, \,\,\quad \text{for all $s\in\R^+$}
$$
\end{description}
%We recall that $(f3)'$ is a consequence of the  $(AR)$  condition.
\noindent

\vskip3pt
\noindent
The following are the main results of the paper.

%\footnote{a good amount of comments needs to be inserted here
%after the main result aslo for the local case $s=1$.}

\begin{theorem}Assume {\rm (I)}-{\rm (III)}, {\rm (f1)-(f3)} and $\lambda=0$. Then \eqref{PS} admits a positive solution $u\in E$.
\label{mainsub}
\end{theorem}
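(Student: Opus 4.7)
\medskip
\noindent\textbf{Proof plan for Theorem \ref{mainsub}.}
The natural strategy is to apply a mountain pass argument to the functional $J_0$ on $E$. I would begin by verifying the mountain pass geometry. The growth condition (f1) together with (f2) (using the relevant alternative depending on whether \eqref{primaint} or \eqref{secondainter} holds) gives $F(s)\le \eps s^2 + C_\eps s^{2^{*}_{s}}$ (resp.\ $F(s)\le \eps s^{p} + C_\eps s^{2^{*}_{s}}$). Combining the trace embedding $E\hookrightarrow L^{2^{*}_{s}}(\R^N)$ with the embedding into the weighted space $L^{2}(\R^N; V\, dx)$ (and in the second case with an interpolation-type control via \eqref{secondainter} into $L^{p}(\R^N; K\, dx)$), one gets $J_0(w)\ge \alpha\|w\|^2-C\|w\|^{\theta}$ for some $\theta>2$, so $0$ is a strict local minimum. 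Condition (f3) ($F(s)/s^2\to +\infty$) together with (I) yields $J_0(tw_0)\to -\infty$ as $t\to+\infty$ for any suitable $w_0\ge 0$ with $w_0(\cdot,0)\not\equiv 0$ on $\{K>0\}$.

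\smallskip
Next I would produce a bounded Cerami sequence at the mountain pass level $c>0$. Since (AR) is not assumed, I would use the monotonicity in (f3): the map $s\mapsto \tfrac12 sf(s)-F(s)$ is non-decreasing on $\R^+$ and non-negative. Either via Jeanjean's monotonicity trick (introducing the family $J_\lambda(w)=\tfrac12\|w\|^2-\lambda \int K F(w(\cdot,0))$ for $\lambda$ in a neighborhood of $1$), or directly by a contradiction argument on $v_n:=w_n/\|w_n\|$, one shows $(w_n)$ is bounded in $E$.

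\smallskip
The heart of the proof is the compactness step, and this is what I expect to be the main obstacle, because $V$ vanishes at infinity and the domain is unbounded. Extract a subsequence with $w_n\rightharpoonup w$ in $E$ and $w_n(\cdot,0)\to w(\cdot,0)$ a.e. The decay assumption (II) on $K$, combined with the interrelation (III) between $K$ and $V$, allows one to prove the key convergence
\begin{equation*}
\int_{\R^N} K(x) f(w_n(x,0))\varphi(x,0)\, dx \longrightarrow \int_{\R^N} K(x) f(w(x,0))\varphi(x,0)\, dx
\end{equation*}
for every $\varphi\in E$, as well as $\int K(x) F(w_n(\cdot,0))\to \int K(x) F(w(\cdot,0))$, following the Alves--Souto scheme \cite{AS}: split the integral into a large ball $B_R$ and its complement; on $B_R$ use the local compact embedding of $H^{s/2}$ into $L^q_{\mathrm{loc}}$; on $B_R^c$ use (II) together with (III) to uniformly control the tail (in case \eqref{primaint}, via $K\le C V$ and the uniform bound $\int V w_n^2<\infty$; in case \eqref{secondainter}, via Hölder with exponent $p$ and the interpolation weight $V^\gamma$). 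Passing to the limit in $J_0'(w_n)\varphi\to 0$ yields $J_0'(w)=0$.

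\smallskip
Finally, to see $w\not\equiv 0$, I would test $J_0'(w_n)w_n=o(\|w_n\|)$ and exploit $J_0(w_n)\to c>0$ together with the compactness just established: if $w\equiv 0$, then $\int K F(w_n(\cdot,0))\to 0$ and $\int K f(w_n(\cdot,0))w_n(\cdot,0)\to 0$, forcing $\|w_n\|\to 0$ and $c=0$, a contradiction. Positivity follows by using $w^-$ as a test function (since $f\equiv 0$ on $\R^-$ by (f1)) to show $w^-\equiv 0$, so $w\ge 0$, and then the strong maximum principle for the extension problem (applied in $\R^{N+1}_+$) together with the Hopf-type boundary condition gives $w>0$; the corresponding trace $u=w(\cdot,0)$ is the desired positive solution of \eqref{PS}.
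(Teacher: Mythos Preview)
Your proposal is correct and follows essentially the same strategy as the paper: mountain-pass geometry from the growth conditions, boundedness of the Cerami sequence via a contradiction on the normalized sequence (the paper implements this through the auxiliary maxima $J(t_nw_n)$ with $t_n\in[0,1]$), the Alves--Souto compactness scheme encapsulated in Proposition~\ref{converge}, and nontriviality from $c>0$. The only cosmetic difference is the positivity step, where the paper argues directly on the trace $u$ via the integral representation of $(-\Delta)^{s/2}$ at a hypothetical zero point rather than invoking the strong maximum principle for the extension.
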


\begin{theorem}
\label{main}
Assume {\rm (I)}-{\rm (III)}, {\rm (f1)-(f2)-(f3)$'$}, (AR), $\lambda=1$ and that one of the following hold
\begin{enumerate}
\item $N>2s$, 
\item $N=2s$, 
\item $s<N<2s$ and $q>\frac{N}{N-s}$,
\item $s<N<2s$ and $q<\frac{N}{N-s},$ with $C_0$ large enough.
\end{enumerate}
Then \eqref{PS} admits a positive solution $u\in E$.
\end{theorem}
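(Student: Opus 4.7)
The plan is to apply the Mountain Pass theorem to the $C^1$ functional $J_1$ defined in \eqref{functional}; the main obstruction is the loss of compactness inherited from both the critical exponent and the unbounded domain. Using (f1)--(f2) and the trace-type fractional Sobolev embedding $E\hookrightarrow L^{2^*_s}(\R^N)$, $w\mapsto w(\cdot,0)$, one verifies $J_1(w)\ge\alpha>0$ on some sphere $\{\|w\|=\rho\}$, while (AR) yields some $w_0\in E$, $w_0(\cdot,0)\ge 0$, with $J_1(tw_0)\to-\infty$ as $t\to+\infty$. The deformation lemma then produces a Palais--Smale sequence $\{w_n\}$ at the min--max level
$$
c=\inf_{\gamma\in\Gamma}\max_{t\in[0,1]}J_1(\gamma(t)),\qquad \Gamma=\{\gamma\in C([0,1],E):\gamma(0)=0,\ J_1(\gamma(1))<0\},
$$
and the classical $J_1(w_n)-\frac{1}{\theta}J_1'(w_n)w_n$ estimate, via (AR), forces $\{w_n\}$ to be bounded in $E$.

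Let $S_s$ denote the sharp constant in the fractional Sobolev trace inequality and set $c^*:=\frac{s}{N}S_s^{N/s}$. The aim is to prove that bounded PS sequences at level $c<c^*$ are relatively compact. The subcritical integral $\int K(x)F(w_n(x,0))\,\ud x$ and its derivative pass to the limit under weak convergence because (I)--(III) force a weighted compact embedding of $E$ on the trace into a suitable Lebesgue space (this is the same compactness argument underlying Theorem \ref{mainsub}). A Lions-type concentration--compactness analysis for the critical term excludes both bubbling at points and mass escaping at infinity: each bubble carries at least $c^*$ of energy, contradicting $c<c^*$, while escape at infinity is ruled out by (II)--(III).

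The core step, and the source of the case distinction, is the strict inequality $c<c^*$. Let $U_\varepsilon$ be the Talenti-type extremals of $S_s$ and $W_\varepsilon=E_s(U_\varepsilon)$ their $s$-harmonic extensions, suitably cut off and recentered at a point where $K$ is comparatively large; testing the path $t\mapsto tW_\varepsilon$ and using $F(s)\ge C_0 s^q$ from (f3)$'$ yields, as $\varepsilon\to 0^+$, an asymptotic of the form
$$
\max_{t\ge 0}J_1(tW_\varepsilon)\le c^* - C_0\,I_q(\varepsilon)+R(\varepsilon),\qquad I_q(\varepsilon):=\int_{\R^N}K(x)U_\varepsilon^q\,\ud x,
$$
with $R(\varepsilon)$ an explicit remainder controlled by the concentration rate of $W_\varepsilon$. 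The decisive comparison is between $I_q(\varepsilon)$ and $R(\varepsilon)$: for $N>2s$ the term $I_q(\varepsilon)$ dominates for every $q\in(2,2^*_s)$, yielding (1); for $N=2s$ the same holds with a logarithmic correction, giving (2); for $s<N<2s$ the crossover exponent is $q=N/(N-s)$, so $I_q$ dominates automatically when $q>N/(N-s)$ (case (3)), while for $q<N/(N-s)$ one restores the inequality by choosing $C_0$ sufficiently large (case (4)). This is the hardest part and requires careful tracking of the test-function asymptotics that is standard in the Brezis--Nirenberg tradition but delicate in the fractional degenerate setting.

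Once $c<c^*$ is secured, the compactness step produces a nontrivial critical point $w\in E$ of $J_1$. Since $f=0$ on $\R^-$ and the critical term involves only $w^+$, testing $J_1'(w)w^-=0$ gives $w\ge 0$; the Harnack inequality for the degenerate Neumann extension problem (see \cite{caffarelli,colorado}) then yields $w>0$ in $\R^{N+1}_+$, and $u:=w(\cdot,0)>0$ is the desired positive solution of \eqref{PS}.
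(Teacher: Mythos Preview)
Your overall strategy is correct and the case analysis for the energy estimate $c<c^*$ matches the paper's Lemma~\ref{crucial} exactly. The difference lies in how you pass from the bounded Palais--Smale sequence to a nontrivial critical point.

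You propose to show that bounded (PS)$_c$ sequences with $c<c^*$ are \emph{relatively compact}, invoking a Lions-type concentration--compactness dichotomy for the critical term. This works, but it is considerably more than the paper needs. The paper never proves strong convergence of $\{w_n\}$. Instead, it simply observes that the weak limit $w$ is already a critical point: the quadratic part of $J_1'(w_n)v$ passes to the limit by weak convergence in $E$, the subcritical term $\int K f(w_n)v$ by Proposition~\ref{converge}(6), and the critical term $\int (w_n^+)^{2^*_s-1}v$ by Proposition~\ref{converge}(5). The only remaining issue is $w\not\equiv 0$, and this is handled by a short contradiction argument: if $w=0$, Proposition~\ref{converge}(3)--(4) kill the subcritical integrals, leaving $\|w_n\|^2=\|w_n^+(x,0)\|_{2^*_s}^{2^*_s}+o(1)$; combining this with the Sobolev trace inequality forces $c\ge\frac{s}{2N}(k_s\S(s,N))^{N/s}$, contradicting Lemma~\ref{crucial}. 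No bubble decomposition, no analysis of mass at infinity for the critical term. Your claim that ``escape at infinity is ruled out by (II)--(III)'' is in fact not quite right as stated, since (II)--(III) concern only the weight $K$ and say nothing about the unweighted critical integral $\int|w_n|^{2^*_s}$; the paper's shortcut sidesteps this issue entirely.

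A second, minor difference: for positivity you invoke a Harnack inequality for the degenerate extension, whereas the paper argues directly on the trace via the singular-integral representation of $(-\Delta)^{s/2}$, showing that a nonnegative solution vanishing at one point must vanish identically. Both are valid.
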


\noindent
Throughout the paper, unless explicitly stated, the symbol $C$ will always denote a generic positive constant,
which may vary from line to line.

\smallskip
\section{Preliminary results}

\noindent
Consider the weighted Banach space:
$$
L^{p}_{K}=\Big\{ u:\R^N \to \R\,\, \mbox{measurable and}\ \int_{\R^N}K(x)|u|^{p} \ud x < \infty\Big\},\quad
\|\cdot\|_{L^{p}_{K}}=\Big(\int_{\R^N}K(x)|u|^{p} \ud x\Big)^{1/p}.
$$
\noindent
The first result, on compact injections for $E$, follows by adapting the arguments in \cite{AS}. 
%For the  sake of completeness, we shall provide a full proof of it.

\begin{proposition}[Compactness]\label{converge}
The following facts hold:
\begin{enumerate}
\item $E$ is compactly embedded  into $L^q_{K}$  for all $q\in (2,2^{*}_{s}),$ provided that \eqref{primaint} holds;
\item $E$ is compactly embedded  into $L^p_{K}$ provided that \eqref{secondainter}  holds;
\item If $w_n \rightharpoonup w$ in $E$, then up to a subsequence,
\[
\lim_n\int_{\mathbb{R}^N} K(x) F(w_n(x,0))  \ud x  =\int_{\mathbb{R}^N} K(x) F(w(x,0))  \ud x ;
\]
\item If $w_n \rightharpoonup w$ in $E$, then up to a subsequence,
\[
\lim_n\int_{\mathbb{R}^N} K(x) w_n(x,0) f(w_n(x,0))   \ud x  =\int_{\mathbb{R}^N} K(x) w(x,0) f(w(x,0))  \ud x ;
\] 
\item If $w_n \rightharpoonup w$ in $E$, then, up to a subsequence, for any $v\in E$,
\[
\lim_n\int_{\mathbb{R}^N} w_n^+(x,0)^{2^{*}_{s}-1}v(x,0)  \ud x  =\int_{\mathbb{R}^N} w^+(x,0)^{2^{*}_{s}-1}v(x,0)   \ud x.
\] 
\item If $w_n \rightharpoonup w$ in $E$, then up to a subsequence, for any $v\in E$,
\[
\lim_n\int_{\mathbb{R}^N} K(x)f(w_n(x,0))v(x,0)   \ud x  =\int_{\mathbb{R}^N} K(x) f(w(x,0))v(x,0)  \ud x .
\] 
\end{enumerate}
\end{proposition}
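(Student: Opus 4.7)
\emph{Plan.} Items~(1)-(2) are the core of the statement; items~(3), (4) and (6) will be reduced to the strong convergence just proved together with the growth conditions on $f$, while (5) is the only piece not reachable by compactness and has to be handled by a duality argument in $L^{2^{*}_{s}}$. For (1)-(2), the starting point is that the trace $w\mapsto w(\cdot,0)$ embeds $E$ continuously into $H^{s/2}(\R^N)\hookrightarrow L^{2^{*}_{s}}(\R^N)$ and compactly into $L^q(B_R)$ for every $q\in[1,2^{*}_{s})$ and $R>0$. Hence, for $w_n\rightharpoonup w$ in $E$, strong convergence in $L^q_K$ (resp.\ $L^p_K$) reduces to proving that $\int_{B_R^c}K|w_n|^q\,\ud x\to 0$ uniformly in $n$ as $R\to\infty$; the rest is a classical diagonal argument against the compact local embedding.

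\emph{Tail estimates.} Under \eqref{primaint}, I split $B_R^c=\{|w_n|\leq c\}\cup\{|w_n|>c\}$. On the sublevel set, $|w_n|^q\leq c^{q-2}|w_n|^2$ and $K\leq\|K/V\|_\infty V$ give a bound of order $c^{q-2}\|K/V\|_\infty\sup_n\|w_n\|^2$, made $<\eps/2$ by choosing $c$ small. On the superlevel set, Chebyshev controls the measure uniformly by $c^{-2^{*}_{s}}\sup_n\|w_n(\cdot,0)\|_{2^{*}_{s}}^{2^{*}_{s}}$, and a H\"older split
\[
K|w_n|^q=K^{1-q/2^{*}_{s}}\big(K|w_n|^{2^{*}_{s}}\big)^{q/2^{*}_{s}}
\]
reduces the remaining piece to $\big(\int_{\{|w_n|>c\}\cap B_R^c}K\,\ud x\big)^{1-q/2^{*}_{s}}$ times a uniformly bounded factor, which vanishes as $R\to\infty$ by hypothesis~(II). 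Under \eqref{secondainter}, the exponent $\gamma$ is chosen exactly so that $p-2\gamma=2^{*}_{s}(1-\gamma)$, whence Young's inequality yields $V^\gamma|u|^p\leq\gamma V|u|^2+(1-\gamma)|u|^{2^{*}_{s}}$; multiplying by $K/V^\gamma$ and integrating gives
\[
\int_{B_R^c}K|u|^p\,\ud x\leq \sup_{B_R^c}\!\big(K/V^\gamma\big)\Big(\gamma\,\|u\|_{L^2_V}^2+(1-\gamma)\|u\|_{L^{2^{*}_{s}}}^{2^{*}_{s}}\Big),
\]
and the supremum vanishes as $R\to\infty$ by the hypothesis.

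\emph{Items (3), (4), (6) and (5).} I combine the strong $L^q_K$ (resp.\ $L^p_K$) convergence with the growth estimates following from (f1)-(f2): under \eqref{primaint}, for every $\eps>0$ there exist $q\in(2,2^{*}_{s})$ and $C_\eps>0$ such that $|F(s)|\leq\eps(s^2+|s|^{2^{*}_{s}})+C_\eps|s|^q$, and analogous inequalities for $sf(s)$ and $f(s)v$; under \eqref{secondainter}, (f1)-(f2) collapse to $|F(s)|\leq C|s|^p$. A standard $\eps/3$ argument absorbs the $\eps$-terms into the uniform bound $\sup_n(\|w_n\|^2+\|w_n(\cdot,0)\|_{2^{*}_{s}}^{2^{*}_{s}})<\infty$, while the $|s|^q$- or $|s|^p$-piece is handled by the strong convergence just proved (paired with $v$ via H\"older when needed); Vitali's theorem then closes the limit passages. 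For (5), compactness is unavailable since the exponent is critical; after extraction, $w_n(\cdot,0)\to w(\cdot,0)$ a.e.\ in $\R^N$ and $\{(w_n^+)^{2^{*}_{s}-1}\}$ is bounded in $L^{(2^{*}_{s})'}(\R^N)$. The classical fact that boundedness plus a.e.\ convergence imply weak convergence in $L^r$ for $r\in(1,\infty)$ yields $(w_n^+)^{2^{*}_{s}-1}\rightharpoonup (w^+)^{2^{*}_{s}-1}$ in $L^{(2^{*}_{s})'}$, and pairing with $v(\cdot,0)\in L^{2^{*}_{s}}(\R^N)$ concludes.

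\emph{Main obstacle.} The technical kernel is the tail estimate in~(2): one must observe that the algebraic identity $p-2\gamma=2^{*}_{s}(1-\gamma)$ holds precisely for the $\gamma$ prescribed by \eqref{secondainter}, which is what makes Young's inequality reproduce exactly the $L^2_V$ and $L^{2^{*}_{s}}$ norms available a priori from the norm on $E$ and the Sobolev trace inequality. Once this matching is in place, the whole argument amounts to transplanting the Alves-Souto sublevel/superlevel dichotomy and the $V^\gamma$-Young trick to the fractional-trace setting, with $H^{s/2}\hookrightarrow L^{2^{*}_{s}}$ playing the role of the classical Sobolev embedding.
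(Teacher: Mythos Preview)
Your proof is correct and follows essentially the same strategy as the paper: the Alves--Souto sublevel/superlevel tail estimate via hypothesis~(II) for (1), the $V^\gamma$-interpolation for (2) (you phrase it via Young's inequality and the identity $p-2\gamma=2^{*}_{s}(1-\gamma)$, the paper via direct minimization of $V s^{2-p}+s^{2^{*}_{s}-p}$---equivalent), growth bounds plus local compactness for (3)--(4), and weak $L^{(2^{*}_{s})'}$-convergence for (5). The only cosmetic difference is in (6): the paper splits into $\{|w_n|\leq 1\}$ and $\{|w_n|>1\}$ and passes to the limit by weak convergence of $K^{\alpha}f(w_n)$ in the appropriate dual Lebesgue spaces (paired against $K^{1-\alpha}v$), whereas you invoke Vitali; both routes work, with the tightness/uniform integrability of the critical piece $K|w_n|^{2^{*}_{s}-1}|v|$ coming from the fixed function $v\in L^{2^{*}_{s}}$ in either argument.
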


\begin{proof}
Assume that condition \eqref{primaint} holds, let $q\in (2,2^{*}_{s})$ and let us prove assertion (1). 
Let $\epsilon >0$. Then, there exist $0<s_0(\eps)<s_1(\eps)$, a positive constant $C(\eps)$ and $C_0$ depending only on $V$ and $K$, such that
\begin{equation}
\label{2.4}
 K(x)|s|^q\leq \epsilon C_0 (V(x)|s|^2 + |s|^{2^{*}_{s}})+C(\eps)K(x)\chi_{[s_0(\eps),s_1(\eps)]}(|s|)|s|^{2^{*}_{s}},\quad\text{for all $s\in \R$}.
\end{equation}
Therefore we obtain, for every $w \in E$ and $r>0$,
\begin{equation}\label{2.5}
 \int_{B^{+^{c}}_{r}(0)\cap \{y=0\}} K(x)|w(x,0)|^q \ud x \leq \epsilon Q(w)+ C(\eps)s_1(\eps)^{2^*_s} \int_{A_\eps\cap (B^{+^{c}}_{r}(0)\cap \{y=0\})} K(x) \ud x,
\end{equation}
where we have set
\begin{equation}
\label{Q-A}
Q(w):=C_0\int_{\R^N} (V(x)|w(x,0)|^2 + |w(x,0)|^{2^{*}_{s}}) \ud x,
\quad
A_\eps:=\big\{x\in \R^N:  s_0(\eps) \leq |w(x,0)|\leq s_1(\eps)\big\}.
\end{equation}
If $(w_n)\subset E$ is such that $w_n \rightharpoonup w$ weakly in $E$ for some $w\in E$, there exists $M>0$ with
\begin{equation}
\begin{split}
\label{boundednesses}
 \int_{\R^{N+1}_{+}}k_s |\nabla w_n|^2 \ud x \ud y + \int_{\R^N}V(x)|w_n(x,0)|^2 \ud x \leq M, \quad \text{for all $n\in \N$}, \\ 
  \int_{\R^N}|w_n(x,0)|^{2^{*}_{s}} \ud x\leq M,\quad \text{for all $n\in \N$},
\end{split}
\end{equation}
so that $Q(w_n)$ is bounded in $\R$. On the other hand, 
if $A^n_\eps=\big\{s_0(\eps) \leq |w_n(x,0)|\leq s_1(\eps)\big\},$ we get 
$$
s_0(\eps)^{2^{*}_{s}}|A^n_\eps|\leq \int_{A^n_\eps}|w_n(x,0)|^{2^{*}_{s}}\ud x \leq \int_{\R^N}|w_n(x,0)|^{2^{*}_{s}}\ud x\leq M, \quad \text{for all $n\in \N$}. 
$$
which implies that $\sup_{n\in \N} |A^n_\eps|<+\infty$.
Then, in light of \eqref{decay-K}, there exists $r(\eps)>0$ such that
\begin{equation}\label{2.6}
 \int_{A^n_\eps\cap (B^{+^{c}}_{r(\eps)}(0)\cap \{y=0\})} K(x) \ud x <\frac{\epsilon}{C(\eps) s_1(\eps)^{2^{*}_{s}}},\quad \text{for all $n\in \N$}. 
\end{equation}
Whence, invoking \eqref{2.5}, we get
\begin{equation}
\label{2.7}
 \int_{B^{+^{c}}_{r(\eps)}(0)\cap \{y=0\}} K(x)|w_n(x,0)|^q \ud x \leq  (2C_0M+1)\eps. 
\end{equation}
By the fractional compact embedding \cite{colorado}, we have
\begin{equation}\label{2.8}
 \lim_{n\to \infty}\int_{B^{+}_{r(\eps)}(0)\cap \{y=0\}} K(x)|w_n(x,0)|^q \ud x=\int_{B^{+}_{r(\eps)}(0)\cap \{y=0\}} K(x)|w(x,0)|^q \ud x.
\end{equation}
Combining \eqref{2.7}-\eqref{2.8}, yields 
$$
\lim_{n}\int_{\R^N\cap \{y=0\}} K(x)|w_n(x,0)|^q \ud x=\int_{\R^N \cap \{y=0\}} K(x)|w(x,0)|^q \ud x,
$$
which concludes the proof of (1). 
\vskip2pt
\noindent
Assume now that condition \eqref{secondainter} holds and let us prove assertion (2).
By a direct calculation, for any $x\in \R^N$ and $s\geq 0$, if $\gamma\in (0,1)$ is
the constant introduced in \eqref{secondainter}, we get 
$$
V(x) s^{2-p} + s^{2^{*}_{s}-p}\geq \omega(p,s) V(x)^{\gamma},
\qquad \omega(p,s)=\Big(\frac{2^{*}_{s}-2}{2^{*}_{s}-p}\Big)\Big(\frac{p-2}{2^{*}_{s}-p}\Big)^{\frac{2-p}{2^{*}_{s}-2}}.
$$
Let $\eps>0$. Combining this inequality with \eqref{secondainter}, there exists $r(\eps)>0$ such that
\begin{equation}
\label{controllokv}
K(x)|s|^p \leq \epsilon \big(V(x)|s|^{2} +|s|^{2^{*}_{s}}\big), \quad\text{for all $s\in \R$ and $|x|\geq r(\eps).$}
\end{equation}
Then, for all  $w\in E$, we conclude
$$
\int_{B^{+^{c}}_{r(\eps)}(0)\cap \{y=0\}} K(x)|w(x,0)|^p \ud x\leq \epsilon \int_{B^{+^{c}}_{r(\eps)}(0)\cap \{y=0\}}
 (V(x)|w(x,0)|^{2} +|w(x,0)|^{2^{*}_{s}})\ud x.
 $$
If $(w_n)\subset E$ and $w_n \rightharpoonup w$ weakly in $E,$ there exists $M>0$ such that \eqref{boundednesses} hold. Whence,
for a suitable radius $r(\eps)>0$ there holds
\begin{equation}\label{2.10}
\int_{B^{+^{c}}_{r(\eps)}(0)\cap \{y=0\}} K(x)|w_n(x,0)|^p \ud x\leq 2\epsilon M,\quad\text{for all $n \in \N$}. 
\end{equation}
Since $p\in (2,2^{*}_{s})$, by the fractional compact embedding we have 
\begin{equation}\label{2.11}
 \lim_{n}\int_{B^{+}_{r(\eps)}(0)\cap \{y=0\}} K(x)|w_n(x,0)|^p \ud x=\int_{B^{+}_{r(\eps)}(0)\cap \{y=0\}} K(x)|w(x,0)|^p \ud x.
\end{equation}
Combining $(\ref{2.10})$ and $(\ref{2.11})$ we get
$$
\lim_{n}\int_{\R^N\cap \{y=0\}} K(x)|w_n(x,0)|^p \ud x=\int_{\R^N \cap \{y=0\}} K(x)|w(x,0)|^p \ud x,
$$
which concludes the proof of assertion (2). 
\vskip2pt
\noindent
Let us now turn to the proof of (3) and (4) under assumption \eqref{primaint}.
From $(f_1)$-$(f_3)$, fixed $q\in (2,2^{*}_{s})$ and given $\epsilon>0$, there exist $0<s_0(\eps)<s_1(\eps)$, $C(\eps)>0$ 
and $C_0$ depending only upon $V$ and $K$, with
\begin{align}\label{2.12}
 |K(x)F(s)|&\leq \epsilon C_0(V(x)|s|^2 +|s|^{2^{*}_{s}})+ C(\eps) K(x)\chi_{[s_0(\eps),s_1(\eps)]}(|s|)|s|^q, \,\,\quad\text{for all $s\in \R$}, \\
\label{2.12.1}
  |K(x)f(s)s|&\leq \epsilon C_0(V(x)|s|^2 +|s|^{2^{*}_{s}})+ C(\eps) K(x)\chi_{[s_0(\eps),s_1(\eps)]}(|s|)|s|^q, \,\,\quad\text{for all $s\in \R$}.
\end{align}
Notice that, by \eqref{decay-K}, arguing as for the proof of (1), there exists $r(\eps)>0$ such that
\begin{equation}\label{2.13}
 \int_{A^n_\eps \cap B^{+^c}_{r(\eps)}(0)\cap \{y=0\}} K(x)\ud x\leq \frac{\epsilon}{C(\eps)s_1(\eps)^q}, \quad\text{for all $n \in \N$}.
\end{equation}
Let $\{w_n\}\in E$ be bounded.
Combining the above inequality with \eqref{boundednesses} and  \eqref{2.12}-\eqref{2.12.1}, we have
\begin{align}\label{2.14}
& \int_{B^{+^{c}}_{r(\eps)}(0)\cap \{y=0\}} K(x)F(w_n(x,0)) \ud x\leq (2C_0M + 1)\epsilon ,\quad\text{for all $n \in \N$}, \\
\label{2.14.1}
& \int_{B^{+^{c}}_{r(\eps)}(0)\cap \{y=0\}} K(x)f(w_n(x,0))w_n(x,0) \ud x\leq (2C_0M + 1)\epsilon ,\quad\text{for all $n \in \N$}.
\end{align}
Since $(w_n(x,0))$ is bounded in $L^{2^{*}_{s}}(\R^N)$, by Strauss lemma \cite[Theorem A.I p.338]{BL},  we infer
\begin{align*}
 & \lim_{n}\int_{B^{+}_{r(\eps)}(0)\cap \{y=0\}} K(x)
F(w_n(x,0)) \ud x=\int_{B^{+}_{r(\eps)}(0)\cap \{y=0\}} K(x)F(w(x,0)) \ud x, \\
& \lim_{n}\int_{B^{+}_{r(\eps)}(0)\cap \{y=0\}} K(x)
f(w_n(x,0))w_n(x,0) \ud x=\int_{B^{+}_{r(\eps)}(0)\cap \{y=0\}} K(x)f(w(x,0)w(x,0) \ud x.
\end{align*}
Combining these limits with \eqref{2.14}-\eqref{2.14.1} we conclude the proof.
\vskip4pt
\noindent
Let us now turn to the proof of (3) and (4) under assumption \eqref{secondainter}. Let $\eps>0$. We learned that
there exists $r(\eps)>0$ such that  such that \eqref{controllokv} holds, yielding
\begin{align*}
& K(x)|F(s)| \leq \epsilon \big(V(x)|F(s)||s|^{2-p} +|F(s)||s|^{2^{*}_{s}-p}\big), \quad\text{for all $s\in \R$ and $|x|\geq r(\eps)$}, \\
& K(x)f(s)s \leq \epsilon \big(V(x)f(s)s|s|^{2-p} +f(s)s|s|^{2^{*}_{s}-p}\big), \quad\text{for all $s\in \R^+$ and $|x|\geq r(\eps)$}.
\end{align*}
From $(f_1)$-$(f_2)$, there exist $0<s_0(\eps)<s_1(\eps)$ satisfying
\begin{align*}
& K(x)|F(s)| \leq \epsilon \big(V(x)|s|^{2} +|s|^{2^{*}_{s}}\big), \quad\text{for all $s\in I_\eps$ and $|x|\geq r(\eps)$},  \\
& K(x)f(s)s \leq \epsilon \big(V(x)|s|^{2} +|s|^{2^{*}_{s}}\big), \quad \text{for all $s\in I_\eps\cap \R^+$ and $|x|\geq r(\eps)$},
\end{align*}
where  $I_\eps=\{\text{$|s|<s_0(\eps)$ or $|s|>s_1(\eps)$}\}.$
Then, we have 
\begin{align}\label{estimate1}
 \int_{B^{+^{c}}_{r(\eps)}(0)\cap \{y=0\}} K(x)F(w_n(x,0) \ud x \leq \epsilon Q(w_n) +C(\eps) \int_{A^n_\eps\cap (B^{+^{c}}_{r(\eps)}(0)\cap \{y=0\})} K(x) \ud x, \\
\label{estimate1-2}
 \int_{B^{+^{c}}_{r(\eps)}(0)\cap \{y=0\}} K(x)f(w_n(x,0)w_n(x,0) \ud x \leq \epsilon Q(w_n) +C(\eps) \int_{A^n_\eps\cap (B^{+^{c}}_{r(\eps)}(0)\cap \{y=0\})} K(x) \ud x,
\end{align}
where 
$$
C(\eps)=\max\Big\{\max_{[s_0(\eps),s_1(\eps)]}|F(s)|,\max_{[s_0(\eps),s_1(\eps)]}|f(s)s|\Big\}.
$$ 
Arguing as for the proof of (1), we have 
\begin{align*}
& \Big|\int_{(B^{+^{c}}_{r(\eps)}(0)\cap \{y=0\})} K(x)F(w_n(x,0)) \ud x\Big|\leq (2M+1)\epsilon, \quad\text{for all $n \in \N$},\\
& \Big|\int_{(B^{+^{c}}_{r(\eps)}(0)\cap \{y=0\})} K(x)f(w_n(x,0)) w_n(x,0) \ud x\Big|\leq (2M+1)\epsilon, \quad\text{for all $n \in \N$}.
\end{align*}
Invoking again Strauss lemma, by the above inequalities, conclusions (3) and (4) follows.
To prove (5), it is enough to observe that $w_n^+(x,0)^{2^{*}_{s}-1}\rightharpoonup w^+(x,0)^{2^{*}_{s}-1}$
weakly in $(L^{2^{*}_{s}})'$. Finally, let us prove (6). If \eqref{primaint} holds, then the sequence
$(\sqrt{K(x)} f(w_n(x,0))\chi_{\{|w_n(x,0)|\leq 1\}})$ is bounded in $L^2(\R^N)$ being
$$
|\sqrt{K(x)} f(w_n(x,0))\chi_{\{|w_n(x,0)|\leq 1\}}|^2\leq C V(x)|w_n(x,0)|^2.
$$
This, by pointwise convergence, yields for every $\varphi\in L^2(\R^N)$
$$
\lim_k\int_{\R^N} \sqrt{K(x)} f(w_n(x,0))\chi_{\{|w_n(x,0)|\leq 1\}}\varphi(x) \ud x=\int_{\R^N} \sqrt{K(x)} f(w(x,0))\chi_{\{|w(x,0)|\leq 1\}}\varphi(x) \ud x.
$$
Given $v\in E$, since $\sqrt{K(x)}\leq C \sqrt{V(x)}$, it follows $\sqrt{K(x)}v(x,0)\in L^2(\R^N)$, yielding 
\begin{equation}
\label{euno}
\lim_k\int_{\R^N} K(x) f(w_n(x,0))\chi_{\{|w_n(x,0)|\leq 1\}} v(x,0) \ud x=\int_{\R^N} K(x) f(w(x,0))\chi_{\{|w(x,0)|\leq 1\}} v(x,0) \ud x.
\end{equation}
In a similar fashion, the sequence
$(K(x) f(w_n(x,0))\chi_{\{|w_n(x,0)|\geq 1\}})$ is bounded in $L^{\frac{2^*_s}{2^*_s-1}}(\R^N)$ being
$$
|K(x) f(w_n(x,0))\chi_{\{|w_n(x,0)|\geq 1\}}|^{\frac{2^*_s}{2^*_s-1}}\leq |w_n(x,0)|^{2^*_s}.
$$
This, by pointwise convergence, and since $v\in E$, yields
\begin{equation}
\label{edue}
\lim_k\int_{\R^N} K(x) f(w_n(x,0))\chi_{\{|w_n(x,0)|\geq 1\}} v(x,0) \ud x=\int_{\R^N} K(x) f(w(x,0))\chi_{\{|w(x,0)|\geq 1\}} v(x,0) \ud x.
\end{equation}
Combining \eqref{euno}-\eqref{edue} yields the assertion. In a similar fashion one can treat the case when 
\eqref{secondainter} holds since, by means of (2), $K^{1/p}v(x,0)\in L^p(\R^N)$ for all $v\in E$ and, up to a subsequence,
\begin{equation*}
|K(x)^{\frac{p-1}{p}} f(w_n(x,0))\chi_{\{|w_n(x,0)|\leq 1\}}|^{p'}\leq K(x)|w_n(x,0)|^p\leq z(x)\in L^1(\R^N).
\end{equation*}
This concludes the proof.
\end{proof}

\noindent
From $(f_1)$-$(f_2)$ one can prove that $J_{\lambda}$ satisfies the Mountain-Pass geometry.

\begin{lemma}[Geometry]
\label{MPG} The functional $J_{\lambda}$ satisfies
\begin{enumerate}
\item There exists $\beta, \rho>0$ such that $J_{\lambda}(u)\geq \beta$ if $\|u\|=\rho$;
\item There exists $e\in E\backslash\{0\}$ with $\|u\|>\rho$ such that $J_{\lambda}(e)\leq 0$;
\end{enumerate}
\end{lemma}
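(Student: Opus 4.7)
The plan is to bound the nonlinear part of $J_\lambda$ by powers of $\|w\|$ strictly greater than two, so that the quadratic term $\tfrac12\|w\|^2$ dominates on a small sphere. Under \eqref{primaint}, conditions $(f_1)$ and $(f_2)$ combine into the pointwise estimate: for every $\eps>0$ there is $C_\eps>0$ with
$$
F(s)\le \eps s^2+C_\eps|s|^{2^*_s},\qquad s\in\R.
$$
Since $K/V\in L^\infty(\R^N)$, the $\eps s^2$-piece is absorbed by $\eps\,\|K/V\|_\infty\int V w^2\,\ud x\le \eps C\|w\|^2$, while the critical pieces (the remainder of $F$ and the $\lambda$-term) are controlled by the trace Sobolev embedding $X^s(\R^{N+1}_+)\hookrightarrow L^{2^*_s}(\R^N)$ as $\int|w(x,0)|^{2^*_s}\,\ud x\le C\|w\|^{2^*_s}$. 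Under \eqref{secondainter}, $(f_1)$ and $(f_2)$ give instead $F(s)\le C|s|^p$ for some $p\in(2,2^*_s)$, and Proposition \ref{converge}(2) yields $\int K|w|^p\,\ud x\le C\|w\|^p$. In either scenario one obtains
$$
J_\lambda(w)\ge \tfrac12\|w\|^2-C_1\|w\|^{\sigma}-C_2\|w\|^{2^*_s},
$$
with $\sigma\in\{p,2^*_s\}>2$ (in the first case after choosing $\eps$ small enough to absorb an $\eps\|w\|^2$-term into the leading one). Picking $\rho>0$ small gives (1) with some $\beta=\beta(\rho)>0$.

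\textbf{Part (2): direction of decay.} Fix a nonnegative $\varphi\in C_0^\infty(\R^N)$ with $\varphi\not\equiv 0$, and let $w_0=E_s(\varphi)\in E$ be its $s$-harmonic extension, so that $w_0(\cdot,0)=\varphi\ge 0$. I want to show $J_\lambda(tw_0)\to-\infty$ as $t\to\infty$. In the subcritical case ($\lambda=0$ with $(f_3)$), the monotonicity of $f(s)/s$ forces $f(s)/s\to+\infty$ as $s\to\infty$ (otherwise $f(s)/s$ would be bounded, hence $F(s)/s^2$ bounded, contradicting $\limsup F(s)/s^2=+\infty$), and therefore $F(s)/s^2\to+\infty$. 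Fatou's lemma applied on $\{\varphi>0\}$ yields
$$
\liminf_{t\to\infty}\frac{1}{t^2}\int_{\R^N}K(x)F(t\varphi(x))\,\ud x=+\infty,
$$
so $t^{-2}J_0(tw_0)\to-\infty$. In the critical case ($\lambda=1$ with $(f_3)'$), the explicit bound $F(s)\ge C_0 s^q$ gives
$$
J_1(tw_0)\le \tfrac{t^2}{2}\|w_0\|^2 - C_0 t^q\int_{\R^N}K\varphi^q\,\ud x - \tfrac{t^{2^*_s}}{2^*_s}\int_{\R^N}\varphi^{2^*_s}\,\ud x,
$$
which tends to $-\infty$ since $2^*_s>2$. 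In either situation $e:=tw_0$ for $t$ large enough satisfies $\|e\|>\rho$ and $J_\lambda(e)\le 0$.

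\textbf{Main obstacle.} The only genuinely delicate point is the estimate in (1), where one must produce a single statement valid under the two alternative interrelations \eqref{primaint} and \eqref{secondainter}: the former requires an $\eps$-splitting argument together with the trace Sobolev embedding, while the latter relies on the weighted $L^p_K$-embedding of Proposition \ref{converge}(2). Part (2) needs no compactness, only the positivity of $K$ on the support of the chosen test function.
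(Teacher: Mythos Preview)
Your argument is correct and follows essentially the same route as the paper. For part (1) the paper attempts to treat both alternatives \eqref{primaint} and \eqref{secondainter} at once via the single pointwise bound $K(x)F(s)\le \eps_0 V(x)s^2 + Cs^{2^*_s} + CK(x)s^p$ together with the continuous embedding $E\hookrightarrow L^p_K$, whereas you separate the two cases explicitly; the underlying estimates are the same. For part (2) the paper simply declares it ``obvious'', so your detailed verification (Fatou under $(f_3)$, direct power comparison under $(f_3)'$) is a strict improvement in completeness.
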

\begin{proof}
 (2) is obvious. Concerning (1), observe that in light of condition \eqref{secondainter} on $V$ and $K$, 
 the space $E$ is {\em continuously} embedded into $L^p_K(\R)$ where
$p\in (2,2^*_s)$ is the precisely the value which appears in condition \eqref{secondainter}. This can be readily obtained by arguing as in the proof of
\cite[part (i) of Theorem 4]{BV} (see formula (8) therein obtained by H\"older inequality) and by using  the fractional Sobolev inequality. This is possible
since in any of the two assumptions between $V$ and $K$, we have that 
\begin{equation*}
\frac{K}{V^\gamma}\in L^\infty(\R),
\qquad \gamma=\frac{ps-N(p-2)}{2s}.
\end{equation*}
This is the fractional counterpart of the assumption on ${\mathcal W}$ in \cite{BV}.
Once this embedding is available, recall that we can write the inequality, for $\eps_0$ to be fixed small
$$
K(x)F(s)\leq \eps_0 V(x)s^2+ Cs^{2^*_s}+C K(x)s^p,\quad x\in\R,\,\, s\in\R^+,
$$
and the Mountain-Pass geometry can be proved.
\end{proof}

\noindent
Therefore, there exists a sequence $\{w_n\}\subset E$, so called {\it Cerami sequence} \cite{Ce}, such that
\begin{equation}
\label{ceramiS}
J_{\lambda}(w_n)\to c, \quad  (1+\|w_n\|)\|J_{\lambda}'(w_n)\|\to 0,
\end{equation}
where $c$  is given by
$$
c=\inf_{\gamma \in \Gamma}\max_{t\in[0,1]}J_{\lambda}(\gamma(t)),
$$
with 
$$
\Gamma=\{\gamma \in C([0,1],E): \gamma(0)=0\ \mbox{and}\ J_{\lambda}(\gamma(1))\leq 0\}.
$$

%\begin{remark}\rm
%Therefore, it is readily seen that
%\footnote{Ok that from $\|w^-\|\to 0$ one can prove $J(w_n^+)\to c$ and also 
%$\langle J'(w_n^+),\varphi\rangle=\langle J'(w_n),\varphi\rangle-\langle w_n^-\varphi\rangle$. This in particular implies that
%if $\| J'(w_n)\|\to 0$ in dual norm, then  $\| J'(w_n^+)\|\to 0$ in dual norm. But do we also obtain $ (1+\|w_n^+\|)\|J'(w_n^+)\|\to 0?$
%This would be 
%$$
%(1+\|w_n^+\|)\|J'(w_n^+)\|=(1+\|w_n\|)(\|J'(w_n^+)\|+o_n(1))=o(1)+(1+\|w_n\|)o_n(1)
%$$ 
%which does not seem to be $o_n(1)$.}
%\begin{equation*}
%J(w_n^+)\to c, \quad  (1+\|w_n^+\|)\|J'(w_n^+)\|\to 0,
%\end{equation*}
%so that the Cerami Palais-Smale sequence $(w_n)\subset E$ can be replaced by the sequence $(w_n^+)\subset E$. 
%\end{remark}

\noindent
Next we turn to the boundedness of $(w_n)$ in $E$. 
\begin{lemma}[Boundedness]
\label{cerami} 
Let $\lambda\in\{0,1\}$. Then the Cerami sequence $(w_n) \subset E$ is bounded.
\end{lemma}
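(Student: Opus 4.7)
The plan is to argue by contradiction, splitting according to whether (AR) is at hand.

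\textbf{Case $\lambda=1$.} This is the standard Ambrosetti--Rabinowitz estimate. With $\theta\in(2,2^{*}_{s})$ from (AR), I combine $J_1(w_n)\to c$ with $J_1'(w_n)(w_n)=o(1)$ (which follows from the Cerami condition $(1+\|w_n\|)\|J_1'(w_n)\|\to 0$) to obtain
\[
c+o(1)=J_1(w_n)-\tfrac{1}{\theta}J_1'(w_n)(w_n)\geq \Big(\tfrac12-\tfrac1\theta\Big)\|w_n\|^2,
\]
since (AR) makes the weighted integrand $\tfrac1\theta f(w_n)w_n-F(w_n)$ non-negative, and $\theta<2^{*}_{s}$ makes the critical contribution $\big(\tfrac1\theta-\tfrac{1}{2^{*}_{s}}\big)\int (w_n^+)^{2^{*}_{s}}\ud x$ non-negative as well. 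Boundedness is immediate.

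\textbf{Case $\lambda=0$.} Here (AR) is replaced by (f3), so I follow the Jeanjean-type normalization argument. Assuming $\|w_n\|\to\infty$, I set $v_n:=w_n/\|w_n\|$, so $\|v_n\|=1$ and, up to a subsequence, $v_n\rightharpoonup v$ in $E$ with $v_n(x,0)\to v(x,0)$ a.e.\ in $\R^N$. If $v\not\equiv 0$, then on the set $\{v(x,0)\neq 0\}$ one has $|w_n(x,0)|\to+\infty$, and I exploit the identity
\[
\frac12-\frac{J_0(w_n)}{\|w_n\|^2}=\int_{\R^N}K(x)\,\frac{F(w_n(x,0))}{w_n(x,0)^2}\,v_n(x,0)^2\,\ud x.
\]
Fatou together with the super-quadratic growth $F(s)/s^2\to+\infty$ from (f3) forces the right-hand side to $+\infty$, while the left-hand side tends to $\tfrac12$: contradiction.

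If instead $v\equiv 0$, I fix an arbitrary $R>0$ and define $z_n:=Rv_n$. Then $z_n\rightharpoonup 0$ in $E$, so by Proposition \ref{converge}(3), $\int K(x)F(z_n(x,0))\ud x\to 0$ and therefore $J_0(z_n)\to R^2/2$. Let $\bar t_n\in[0,1]$ realize $J_0(\bar t_n w_n)=\max_{t\in[0,1]}J_0(tw_n)$. Since $z_n=(R/\|w_n\|)w_n$ with $R/\|w_n\|\in[0,1]$ eventually, one gets $J_0(\bar t_n w_n)\geq J_0(z_n)\to R^2/2$, and the arbitrariness of $R$ gives $J_0(\bar t_n w_n)\to+\infty$. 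For $n$ large this forces $\bar t_n\in(0,1)$; interior maximality then yields $J_0'(\bar t_n w_n)(\bar t_n w_n)=0$ and hence
\[
J_0(\bar t_n w_n)=\int_{\R^N}K(x)\Big[\tfrac12 f(\bar t_n w_n)\,\bar t_n w_n-F(\bar t_n w_n)\Big]\ud x.
\]
By (f3), the map $s\mapsto\tfrac12 f(s)s-F(s)$ is non-decreasing on $\R^+$ (a direct consequence of $f(s)/s$ being non-decreasing), and since $f\equiv 0$ on $\R^-$, the integrand is dominated pointwise by its value at $w_n(x,0)$. The Cerami property $(1+\|w_n\|)\|J_0'(w_n)\|\to 0$ therefore gives
\[
J_0(\bar t_n w_n)\leq J_0(w_n)-\tfrac12 J_0'(w_n)(w_n)=c+o(1),
\]
which contradicts $J_0(\bar t_n w_n)\to+\infty$.

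The principal obstacle is the subcase $v\equiv 0$ for $\lambda=0$: without (AR) the energy identity alone does not control $\|w_n\|$, and one has to produce the auxiliary point $\bar t_n w_n$ on the segment $[0,w_n]$ whose energy blows up, then transfer this blow-up back to $w_n$ via the monotonicity consequence of (f3). The weighted-space compactness from Proposition \ref{converge} plays an essential role, as it is what allows the conclusion $\int K F(Rv_n)\ud x\to 0$ along the rescaled sequence.
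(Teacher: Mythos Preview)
Your argument is essentially the paper's: the (AR) estimate for $\lambda=1$ and Jeanjean's maximization trick for $\lambda=0$, with the same use of the monotonicity of $s\mapsto \tfrac12 sf(s)-F(s)$ and of Proposition~\ref{converge}(3). The only point to tighten is the dichotomy in the $\lambda=0$ case: since $f\equiv 0$ on $\R^-$ one has $F(s)/s^2\to+\infty$ only as $s\to+\infty$, so your Fatou step gives a contradiction only when $\{v(\cdot,0)>0\}$ has positive measure, not merely when $v\not\equiv 0$. The paper handles this by first testing $J_\lambda'(w_n)$ against $-w_n^-$ to obtain $\|w_n^-\|\to 0$, which forces $v\geq 0$; alternatively, your ``$v\equiv 0$'' branch already works verbatim under the weaker hypothesis $v^+\equiv 0$ (then $F(Rv(\cdot,0))=0$ and Proposition~\ref{converge}(3) still yields $\int K F(Rv_n)\to 0$), so replacing your split ``$v\not\equiv 0$ / $v\equiv 0$'' by ``$v^+\not\equiv 0$ / $v^+\equiv 0$'' closes the gap.
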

\begin{proof}

First of all, we observe that $w_n^-\in E$ and, by the definition of $J_{\lambda}$, 
\begin{align*}
J_{\lambda}'(w_n)(-w_n^-)& =-\int_{ \R^{N+1}_{+}}k_{s}y^{1-s}\nabla w_n \cdot\nabla w_n^- \ud x\ud y -\int_{\R^N}V(x)w_n(x,0)w_n^-(x,0) \\
&= \int_{ \R^{N+1}_{+}}k_{s}y^{1-s}|\nabla w_n^-|^2 \ud x\ud y +\int_{\R^N}V(x)w_n^-(x,0)^2=\|w_n^-\|^2.
\end{align*}
Since $(1+\|w_n\|)J_{\lambda}'(w_n)(-\|w_n^-\|^{-1} w_n^-)=o_n(1)$ as $n\to\infty$, it follows that  $J_{\lambda}'(w_n)(-w_n^-)=o_n(1)$ as $n\to\infty$,
which in turn implies that $\|w_n^-\|=o_n(1)$, as $n\to\infty$.
\vskip3pt
\noindent{\bf Case $\lambda=0.$} Denote $J_0 = J.$ Let $t_n\in [0,1]$ be such that 
\[
J(t_nw_n)=\max_{t\in [0,1]} J(tw_n).
\]
We {\em claim} that $J(t_nw_n)$ is bounded from above.
Without loss of generality, we may assume that $t_n \in (0,1)$ for all $n$. Then, we have $J'(t_nw_n)(w_n)=0$ and 
\begin{align}
\label{computH}
2J(t_nw_n)& = 2J(t_nw_n)-J'(t_nw_n)(t_nw_n) \notag \\
&= \int_{\mathbb{R}^N} K(x)\H(t_n w_n(x,0)) \ud x  
= \int_{\mathbb{R}^N} K(x)\H(t_n w_n^+(x,0)) \ud x,  
\end{align}
where $\H(s)=sf(s)-2F(s)$ is nondecreasing and $\H=0$ on $\R^-$. 
Thus, since $t_n \in (0,1)$ and $w_n^+\geq 0$, from formula \eqref{computH} we obtain that
\begin{align*}
2J(t_nw_n)\leq & \int_{\mathbb{R}^N} K(x)\H(w_n^+(x,0)) \ud x 
= \int_{\mathbb{R}^N} K(x)\H(w_n(x,0)) \ud x   \\
=  & 2J(w_n)-J'(w_n)(w_n)=  2J(w_n)+ o_n(1),
\end{align*}
which proves the claim.
Now, we prove that $(w_n) \subset E$ is bounded.  Assume by contradiction that, up
to subsequence, $\|w_n\|\to +\infty$ as $n\to\infty$. Set $z_n:=w_n/\|w_n\|$ and suppose that
$z_n \rightharpoonup z$, as $n\to\infty$, in $E$. We now {\em claim} that $z(x,0)=0$ almost everywhere in $\R^N$. In fact,
\begin{align*}
o_n(1)+\frac{1}{2}&= \int_{\mathbb{R}^N} \frac{K(x)F(w_n(x,0))}{\|w_n\|^2}   \ud x 
=\int_{\mathbb{R}^N} \frac{K(x)F(w_n(x,0))}{|w_n(x,0)|^2} z_n^2(x,0)   \ud x.
\end{align*}
By $(f_3)$, given $\tau >0$ there exists $\xi_\tau >0$
such that $F(s)\geq \tau s^2$ for all $|s|\geq \xi_\tau $. Thus,
 \[
 o_n(1)+\frac{1}{2} \geq \int_{\{|w_n(x,0)|\geq \xi_\tau \}} \frac{K(x)F(w_n(x,0))}{|w_n(x,0)|^2} z_n^2(x,0)   \ud x 
 \geq 
 \tau \int_{\R^N}K(x)  z_n^2(x,0)  \chi_{\{|z_n(x,0)|\geq \frac{\xi_\tau}{\|w_n\|} \}} \ud x.
 \]
Thus, by Fatou lemma, since $z_n^2(x,0)  \chi_{\{|z_n(x,0)|\geq \frac{\xi_\tau}{\|w_n\|} \}} \to z(x,0)$ a.e.,
for any $\tau>0$, we conclude
$$
\frac{1}{2} \geq \tau \int_{\R^N} K(x) z^2(x,0) \ud x.
$$
 Since $K>0$, it follows $z(x,0)=0$, by the arbitrariness of $\tau>0$ and the claim follows.
Now, let $B>0$.
Of course $B\|w_n\|^{-1}\in [0,1]$ eventually for $n\geq n_B$, for some $n_B\in\N$. Thus,
\[
J(t_n w_n) \geq J(B z_n) = 
\frac{B^2}{2}-
\int_{\mathbb{R}^N} K(x)F(B z_n(x,0)) \ud x,
\]
since $t_n$ is a maximum point. By Proposition~\ref{converge}, it follows  
$$
\int_{\R^N} K(x)F(B z_n(x,0))  \ud x \to \int_{\R^N} K(x)F(B z(x,0))=0,
$$
and we have  $J(t_nw_n)+o_n(1) \geq B^2/2,$
 which yields $\sup\{J(t_nw_n):n\in\N\}\geq B^2/2$, a contradiction if
$$
B=2\sqrt{\sup\{J(t_nw_n):n\in\N\}}\in (0,\infty).
$$ 
This concludes the proof.
\vskip3pt
\noindent{\bf Case $\lambda=1.$} Denote $J_\lambda = J.$ The boundedness of the $\{w_n\}$ in $E$  follows easily from (AR), since
$$
 o_n(1)+ c \geq  J(w_n)-\frac{1}{\theta}J'(w_n)(w_n)\geq (\frac{1}{2}-\frac{1}{\theta})\|w_n\|^2.
$$
This concludes the proof.
 \end{proof}

\noindent
The following Sobolev inequality can be found in \cite{colorado},
\begin{equation}\label{trace}
\int_{ \R^{N+1}_{+}}y^{1-s}|\nabla w|^2 \ud x\ud y\geq \S(s,N)\Big(\int_{ \R^{N}}| w(x,0)|^{2^{*}_{s}} \ud x\Big)^{\frac{2}{2^{*}_{s}}},\quad \text{for all $w \in X^{s}(\R^{N+1}_{+})$},
 \end{equation}
where 
$$
\S(s,N)= \frac{\Gamma(\frac{s}{2})\Gamma(\frac{1}{2}(N-s)))(\Gamma(N))^{\frac{s}{N}}}{2\pi^{\frac{s}{N}}\Gamma(\frac{1}{2}(2-s))
\Gamma(\frac{1}{2}(N+s))(\Gamma(\frac{1}{2}N))^{\frac{s}{N}}}.
$$
This constant is achieved on the family of functions \cite{colorado, coti, servadei2} $w_{\epsilon}=E_{s}(u_{\epsilon})$ (by \cite{talenti}
for $s=2$), where
$$
u_{\epsilon}(x)=
\frac{\epsilon^{\frac{N-s}{2}}}{(|x|^2 + \epsilon^2)^{\frac{N-s}{2}}} , \,\quad
\epsilon > 0.
$$
Furthermore, take 
 $\phi(x,y)=\phi_{0}(|(x,y)|),$ where $\phi_0 \in C^{\infty}(0,\infty)$ is a non increasing 
cut-off  such that
$$
\phi_0(s)=1\quad \mbox{if}\ s\in [0, 1/2],\quad \phi_0(s)=0,\quad \mbox{if}\ s\geq 1.
$$
Let $\phi w_{\epsilon}$ which belongs to $X^{s}(\R^{N+1}_{+}).$
By \cite[Lemma 3.8]{barrios} (which is formulated on a bounded domain $\Omega$, but which holds with the very same proof
when taking $\Omega=\R^N$), we have 

%\footnote{This next statement was formulated in the bounded domain. Remember that in a {\em bounded} domain
%the notions of fractional Laplacian disagree with the notion of fractional laplacian in $\R^N$, so we should check
%if in this statement taken from \cite{colorado} one can take $\Omega=\R^N$ and everything works exactly the same way
%and in particular the constants $C$ must be independent of the domain $\Omega$.}
%\footnote{A rigorous justification or citatio for formula \eqref{stima-q} has to be given.}

\begin{lemma}[concentration]
\label{Lemma 3.8}  
The family $\{\phi w_{\epsilon}\}$, and its trace on $\{y=0\}$, namely, $\phi u_{\epsilon},$ satisfy
\begin{equation}\label{3.40}
     \|\phi w_{\epsilon}\|^{2}_{X^{s}}= \|w_{\epsilon}\|^{2}_{X^{s}}+ \O(\epsilon^{N-s}),
\end{equation}
\begin{equation}\label{3.41}
\|\phi u_{\epsilon}\|^{2}_{L^2}=\left\{ \begin{array}{lcr}
\O(\epsilon^{s}) &\mbox{if}& N >2s,\\  
 \O(\epsilon^{s}\log(1/\epsilon)) &\mbox{if}& N= 2s,\\ 
 \O(\epsilon^{N-s})&\mbox{if}& N <2s,\\  
\end{array}\right.
\end{equation}
%\begin{equation}\label{3.42}
%\|\phi u_{\epsilon}\|^{r}_{L^r}\geq C \epsilon^{\frac{N-s}{2}},\quad  N>s, \,\,\,\, r=\frac{N+s}{N-s},
%\end{equation}
for $\epsilon>0$ small enough.
Define $\eta_{\epsilon}=\phi w_{\epsilon} /\|\phi u_{\epsilon}\|_{L^{2^{*}_{s} }} $, then
\begin{equation} \label{3.53.1}
                 \|\eta_{\epsilon}\|^{2}_{X^{s}}=  k_{s}\S(s,N)  + \O(\epsilon^{N-s}),
\end{equation}
\begin{equation}\label{3.53.2}
\|\eta_{\epsilon}(x,0)\|^{2}_{L^2}=\left\{ \begin{array}{lcr}
\O(\epsilon^{s})&\mbox{if}& N >2s,\\   
 \O(\epsilon^{s}\log({1/\epsilon}) )&\mbox{if}& N = 2s,\\ 
\O(\epsilon^{N-s})&\mbox{if}& N <2s.\\  
\end{array}\right.
\end{equation}
and 
\begin{equation}
\label{stima-q}
\|\eta_\eps(x,0)\|_{L^q}^q= 
\left\{ \begin{array}{lll}
\O(\eps^{\frac{2N-(N-s)q}{2}})&  &\text{if $q>\frac{N}{N-s}$ (or $N\geq 2s$),}  \\   
 \O(\eps^{\frac{(N-s)q}{2}})  & &\text{if $q<\frac{N}{N-s}$}.\\  
\end{array}\right.
\end{equation}
Here with the notation $a_\eps=\O(b_\eps)$ we mean that $a_\eps/b_\eps$ is uniformly bounded with respect to $\eps$.
\end{lemma}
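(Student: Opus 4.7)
The strategy is to follow the classical Brezis--Nirenberg-type concentration analysis, adapted to the fractional Caffarelli--Silvestre setting as in \cite{barrios}, and to exploit the explicit form of $u_\eps$ together with the scaling $x=\eps y$.

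For \eqref{3.40} I would expand $|\nabla(\phi w_\eps)|^2=\phi^2|\nabla w_\eps|^2+w_\eps^2|\nabla\phi|^2+2\phi w_\eps \nabla\phi\cdot\nabla w_\eps$, so that
\[
\|\phi w_\eps\|_{X^s}^2-\|w_\eps\|_{X^s}^2=\int_{\R_+^{N+1}}k_s y^{1-s}\Big((\phi^2-1)|\nabla w_\eps|^2+w_\eps^2|\nabla\phi|^2+2\phi w_\eps\nabla\phi\cdot\nabla w_\eps\Big)\ud x\ud y.
\]
Because $\nabla\phi$ and $\phi^2-1$ are supported in $\{1/2\leq|(x,y)|\leq 1\}$, where $w_\eps(x,y)$ decays like $\eps^{(N-s)/2}$ (the Poisson-type extension inherits the same concentration scale of $u_\eps$), all three pieces are $O(\eps^{N-s})$ after the rescaling $(x,y)\mapsto (\eps x,\eps y)$ and using the Cauchy--Schwarz inequality for the cross term.

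For \eqref{3.41} and \eqref{stima-q} I would compute directly on the boundary trace $u_\eps$. The change of variables $x=\eps y$ gives
\[
\|\phi u_\eps\|_{L^r}^r=\eps^{N-(N-s)r/2}\int_{|y|\leq 1/\eps}\phi(\eps y,0)^r (|y|^2+1)^{-(N-s)r/2}\ud y,
\]
valid for $r\in\{2,q,2^*_s\}$. The asymptotic behavior then reduces to deciding whether the integrand is integrable at infinity, which happens precisely when $(N-s)r>N$: this yields the constant case; when $(N-s)r=N$ the radial integral produces a $\log(1/\eps)$ factor; when $(N-s)r<N$ the integral grows like $(1/\eps)^{N-(N-s)r}$, and these three regimes yield exactly the three cases of \eqref{3.41} (choosing $r=2$) and the two cases of \eqref{stima-q}. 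In particular, for $r=2^*_s$ one has $(N-s)r=2N>N$ so $\|\phi u_\eps\|_{L^{2^*_s}}$ is uniformly bounded and bounded below.

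The estimates \eqref{3.53.1} and \eqref{3.53.2} then follow by dividing: using the fact that the family $w_\eps=E_s(u_\eps)$ attains equality in \eqref{trace}, we have $\|w_\eps\|_{X^s}^2=k_s \S(s,N)\|u_\eps\|_{L^{2^*_s}}^2$, and combining with \eqref{3.40} and the estimate $\|\phi u_\eps\|_{L^{2^*_s}}^{2^*_s}=\|u_\eps\|_{L^{2^*_s}}^{2^*_s}+O(\eps^N)$ (the tail $|x|\geq 1$ contributes $O(\eps^N)$), a Taylor expansion gives the claim. I expect the one mildly delicate point to be verifying the claimed bound $O(\eps^{N-s})$ for the cross-term in the $X^s$ expansion, since it requires simultaneously a uniform pointwise bound on $w_\eps$ and its gradient on the annular region $\{1/2\leq|(x,y)|\leq 1\}$; the routine way is to use the explicit Poisson-type representation of $E_s(u_\eps)$ and the dilation invariance $w_\eps(x,y)=\eps^{-(N-s)/2}w_1(x/\eps,y/\eps)$, which reduces the estimate to bounded quantities times the explicit factor $\eps^{N-s}$.
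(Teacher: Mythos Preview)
Your proposal is correct and follows essentially the same line as the paper, which (in the Remark after the lemma) likewise invokes the dilation $w_{\epsilon}(x,y)=\epsilon^{(s-N)/2}w_{1}(x/\epsilon,y/\epsilon)$ from \cite{barrios} to control the cross term and the $w_\epsilon^2|\nabla\phi|^2$ term in the $X^s$ expansion, and performs the same rescaling $x=\epsilon y$ for the trace $L^q$ computations. One harmless slip: $\phi^{2}-1$ is supported on $\{|(x,y)|\ge 1/2\}$, not only on the annulus, so the term $(\phi^{2}-1)|\nabla w_\epsilon|^{2}$ actually requires the tail integrability of $y^{1-s}|\nabla w_{1}|^{2}$ at infinity; this is exactly what the Poisson-kernel/self-similarity argument you cite (and the paper defers to \cite{barrios}) provides.
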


\begin{remark}\rm
We remark that, actually, except \eqref{3.40} and \eqref{3.53.1}, the other estimates follow exactly as in local case (see \cite{BN}), because
in these cases, we know the explicity expression for $u_{\epsilon}.$ While, for $w_{\epsilon},$ except for $s=1$ (see \cite{tanhalf})
and $s=2$ (local case), the explicit expressions are not available. But, in \cite{barrios}, the authors were clever to overcome
this difficulty, by exploring some  properties of the  Poisson kernel. The $s$-harmonic extension of the $u_{\epsilon}$ has the following explicit expression
$$
w_{\epsilon}(x,y)=P^{s}_{y}\ast u_{\epsilon}(x)=C_{N,s}y^{s}\int_{\R^N}\frac{u_{\epsilon}(\xi)}{(|x-\xi|^2 + y^2)^{\frac{N+s}{2} } } \ud\xi,
 \quad \text{for some $C_{N,s}>0$.}
 $$
Noticing that as $u_{\epsilon}$ and $P^{s}_{y}$ are self-similar functions, namely
$$
u_{\epsilon}(x)=\epsilon^{\frac{s-N}{2}}u_1(x/\epsilon),\qquad 
P^{s}_{y}(x)=\frac{1}{y^N}P^{s}_{1}(x/|y|)=\frac{y^s}{(|x|^2 + y^2)^{\frac{N+s}{2}}},
$$
then 
$$
w_{\epsilon}(x,y)=\epsilon^{\frac{s-N}{2}}w_1(x/\epsilon,y/\epsilon).
$$
Exploiting this fact, they estimate as follows
$$
\int_{\R^{N+1}_{+}} y^{1-s}  w_{\epsilon}\phi \nabla \phi\cdot \nabla w_{\epsilon} \ud x \ud y \leq \O(\epsilon^{ N-s}),\quad
\int_{\R^{N+1}_{+}} y^{1-s}|w_{\epsilon}\nabla \phi|^2 \ud x \ud y \leq \O(\epsilon^{ N-s}).
$$
Combining these inequalities, \eqref{3.40} holds.
The inequality \eqref{3.53.1} comes as a consequence.
Concerning \eqref{stima-q}, we justify it in the case $q<N/(N-s)$, the opposite case being similar. We have
\begin{align*}
\|\phi u_\eps\|_{L^q}^q& =\int_{{\mathbb R}^N} |\phi|^q|u_\eps|^q  \ud x\geq \int_{B(0,\frac{1}{2})} |u_\eps|^q \ud x\\
&=\eps^{\frac{(N-s)q}{2}}\int_{B(0,\frac{1}{2})} \frac{1}{(\eps^2+|x|^2)^{\frac{(N-s)q}{2}}} \ud x \\
&=\eps^{\frac{(N-s)q}{2}}\int_{B(0,\frac{1}{2\eps})} \frac{1}{\eps^{(N-s)q}(1+|y|^2)^{\frac{(N-s)q}{2}}} \eps^N \ud y \\
&=\eps^{N-\frac{(N-s)q}{2}}\int_{B(0,\frac{1}{2\eps})} \frac{1}{(1+|y|^2)^{\frac{(N-s)q}{2}}} \ud y \\
&=\eps^{N-\frac{(N-s)q}{2}}C\int_0^{1/(2\eps)}\frac{1}{(1+\varrho^2)^{\frac{(N-s)q}{2}}} \varrho^{N-1} \ud \varrho \\
&=\eps^{N-\frac{(N-s)q}{2}}C\left(\int_0^{1}\frac{1}{(1+\varrho^2)^{\frac{(N-s)q}{2}}} \varrho^{N-1} \ud \varrho+
\int_1^{1/(2\eps)}\frac{1}{(1+\varrho^2)^{\frac{(N-s)q}{2}}} \varrho^{N-1} \ud\varrho\right) \\
&\geq \eps^{N-\frac{(N-s)q}{2}}\Big(C+
C\int_1^{1/(2\eps)}\frac{1}{\varrho^{(N-s)q-N+1}} \ud\varrho\Big) \\
&\geq \eps^{N-\frac{(N-s)q}{2}}\Big(C+
C\eps^{(N-s)q-N})\Big) 
\geq C\eps^{\frac{(N-s)q}{2}},
\end{align*}
for $\eps>0$ small enough. Analogously, we get
\begin{align*}
\|\phi u_\eps\|_{L^q}^q& =\int_{{\mathbb R}^N} |\phi|^q|u_\eps|^q\ud x\leq \int_{B(0,1)} |u_\eps|^q \ud x\\
&=\eps^{\frac{(N-s)q}{2}}\int_{B(0,1)} \frac{1}{(\eps^2+|x|^2)^{\frac{(N-s)q}{2}}} \ud x \\
&=\eps^{\frac{(N-s)q}{2}}\int_{B(0,\frac{1}{\eps})} \frac{1}{\eps^{(N-s)q}(1+|y|^2)^{\frac{(N-s)q}{2}}} \eps^N \ud y \\
&=\eps^{N-\frac{(N-s)q}{2}}\int_{B(0,\frac{1}{\eps})} \frac{1}{(1+|y|^2)^{\frac{(N-s)q}{2}}} \ud y \\
&=\eps^{N-\frac{(N-s)q}{2}}C\int_0^{1/\eps}\frac{1}{(1+\varrho^2)^{\frac{(N-s)q}{2}}} \varrho^{N-1} \ud \varrho \\
&=\eps^{N-\frac{(N-s)q}{2}}C\left(\int_0^{1}\frac{1}{(1+\varrho^2)^{\frac{(N-s)q}{2}}} \varrho^{N-1} \ud\varrho+
\int_1^{1/\eps}\frac{1}{(1+\varrho^2)^{\frac{(N-s)q}{2}}} \varrho^{N-1} \ud\varrho\right) \\
&\leq \eps^{N-\frac{(N-s)q}{2}}\Big(C+
C\int_1^{1/\eps}\frac{1}{\varrho^{(N-s)q-N+1}} \ud\varrho\Big) \\
&\leq \eps^{N-\frac{(N-s)q}{2}}\Big(C+
C\eps^{(N-s)q-N})\Big) \leq C\eps^{\frac{(N-s)q}{2}},
\end{align*}
for $\eps>0$ small enough.
Since $\|\phi u_{\epsilon}\|_{L^{2^{*}_{s} }}$ converges to a positive constant,
the assertion follows.
\end{remark}

\noindent
The following result will be crucial for the proof of our main result

\begin{lemma}[MP energy bound]
\label{crucial}
Let $\lambda=1$ and let {\rm (f1)-(f2)-(f3)$'$} hold. Then $c< \frac{s}{2N}(k_{s}\S(s,N))^{N/s}$.
\end{lemma}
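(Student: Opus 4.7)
The plan is a fractional Brezis--Nirenberg test function argument. Taking the family $\eta_\eps$ from the concentration Lemma~\ref{Lemma 3.8}, I aim to show that, for a suitable $\eps>0$,
\[
\sup_{t\ge 0} J_{1}(t\eta_\eps) \;<\; \frac{s}{2N}(k_{s}\S(s,N))^{N/s}.
\]
Since $J_{1}(t\eta_\eps)\to-\infty$ as $t\to\infty$ and $J_{1}(0)=0$, a suitable rescaling $t\mapsto tT\eta_\eps$ with $T$ large provides a path in $\Gamma$, so that $c\le\sup_{t\ge 0}J_{1}(t\eta_\eps)$. The facts that $\eta_\eps\ge 0$ and $\|\eta_\eps(x,0)\|_{L^{2^{*}_{s}}}=1$ by the normalization in Lemma~\ref{Lemma 3.8} allow to drop the $w^+$ and to reduce the critical term to a constant.

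Using {\rm (f3)$'$}, I bound
\[
J_{1}(t\eta_\eps) \le \frac{t^2}{2}\|\eta_\eps\|^2 - \frac{t^{2^{*}_{s}}}{2^{*}_{s}} - C_0 t^q \int_{\R^N} K(x)\eta_\eps(x,0)^q\,\ud x.
\]
A direct one-variable computation gives $\max_{t\ge 0}(\tfrac{t^2}{2}A-\tfrac{t^{2^{*}_{s}}}{2^{*}_{s}})=\tfrac{s}{2N}A^{N/s}$, so with $A=\|\eta_\eps\|^2=\|\eta_\eps\|_{X^{s}}^2+\int V(x)\eta_\eps(x,0)^2\,\ud x$ the bounds \eqref{3.53.1}--\eqref{3.53.2}, together with $\int V\eta_\eps(x,0)^2\,\ud x\le C\|\eta_\eps(x,0)\|_{L^2}^2$ (since $V$ is continuous and $\eta_\eps$ has uniformly bounded support), yield
\[
\frac{s}{2N}\|\eta_\eps\|^{2N/s} = \frac{s}{2N}(k_{s}\S(s,N))^{N/s} + \O(E_\eps),
\]
with $E_\eps=\eps^s$, $\eps^s\log(1/\eps)$, $\eps^{N-s}$ in cases (1), (2), (3)--(4) respectively. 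To exploit {\rm (f3)$'$}, one shows that the maximizer $t_\eps$ of $t\mapsto J_{1}(t\eta_\eps)$ is bounded above and below uniformly in small $\eps$ via a short perturbation argument applied to the equation $J_1'(t_\eps\eta_\eps)(\eta_\eps)=0$. Since $K\ge K_0>0$ on a neighborhood of the origin, the push-down is then controlled from below by $c\,C_0\,\eps^{\alpha(q)}$, where $\alpha(q)$ is the exponent provided by \eqref{stima-q}.

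Comparing the two $\eps$-expansions yields the four cases. In (1) and (2) one directly checks that $\alpha(q)<s$ whenever $q>2$, so small $\eps$ alone closes the inequality for any $C_0>0$ (a logarithmic factor in case (2) is absorbed). In (3), the combination $s<N<2s$ with $q>N/(N-s)$ is arranged so that the subcritical push-down dominates the $\eps^{N-s}$ error for small $\eps$. In (4) the powers of $\eps$ no longer favor the subcritical term, and one instead fixes a small $\eps>0$ and exploits the freedom to take $C_0$ large, so that the coefficient of $\eps^{(N-s)q/2}$ overcomes the coefficient of $\eps^{N-s}$. The main technical obstacle is the sharp $\eps$-order bookkeeping in case (3), where the critical Sobolev correction and the subcritical gain are of very close order and one must combine the precise form of \eqref{stima-q} with the uniform two-sided bound on $t_\eps$ to obtain a strict sign.
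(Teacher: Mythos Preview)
Your proposal is correct and follows essentially the same route as the paper: both use the concentrated family $\eta_\eps$, bound $J_1(t\eta_\eps)$ via {\rm (f3)$'$} by a function of the form $\psi(t)=\tfrac{t^2}{2}\|\eta_\eps\|^2-\tfrac{t^{2^{*}_{s}}}{2^{*}_{s}}-C_0 t^q\|\eta_\eps(\cdot,0)\|_{L^q}^q$ (using that $K$ is bounded below on the support of $\phi$), establish uniform two-sided bounds on the maximizer $t_\eps$, and then compare the $\eps$-powers coming from \eqref{3.53.1}--\eqref{3.53.2} and \eqref{stima-q} in the four cases. The only cosmetic difference is that the paper bounds the pure critical part via monotonicity of $t\mapsto \tfrac{t^2}{2}A-\tfrac{t^{2^{*}_{s}}}{2^{*}_{s}}$ on $[0,A^{1/(2^{*}_{s}-2)}]$, whereas you invoke its explicit maximum $\tfrac{s}{2N}A^{N/s}$; and your treatment of case (4) (fix a small $\eps$, then take $C_0$ large) is arguably cleaner than the paper's shorthand $C_0=\eps^{-\vartheta}$.
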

\begin{proof}
By definition of $c$, it is sufficient to prove that there exists $\epsilon >0$ small enough that
$$
\sup_{t\geq 0} J(t\eta_{\epsilon}) < \frac{s}{2N}(k_{s}\S(s,N))^{N/s}, \quad  J=J_1.
$$
By definition of $J,$ we have
$$
J(t\eta_{\epsilon})=\frac{t^2}{2}\|\eta_{\epsilon}\|^{2} - \int_{\mathbb{R}^N} K(x) F(t\eta_{\epsilon}(x,0))   \ud x  
- \frac{t^{2^{*}_{s}}}{2^{*}_{s}}.   
$$
By the assumptions of $f$, there exist $q\in (2,2^*_s)$ and  $C_0>0$ with
$F(s)\geq C_0 s^q$ for any $s\in \R^+$. Then
$$
J(t\eta_{\epsilon})\leq \psi(t),   \,\,\quad 
\psi(t)=\frac{t^2}{2}\|\eta_{\epsilon}\|^2- C_0t^q \int_{\mathbb{R}^N} |\eta_{\epsilon}(x,0)|^q  \ud x  
- \frac{t^{2^{*}_{s}} }{2^{*}_{s}}.   
$$
Since $\psi(t)\to-\infty$ as $t \to+\infty$, we have $\sup\{\psi(t):t\geq 0\}=\psi(t_{\epsilon})$
for some $ t_{\epsilon}>0$, so that
$$
\|\eta_{\epsilon}\|^2 - C_0q t^{q-2}_\eps\int_{\mathbb{R}^N} |\eta_{\epsilon}(x,0))|^q  \ud x  
=t^{2^{*}_{s}-2}_{\epsilon},
$$
which yields $\sigma_0\leq t_{\epsilon}\leq \|\eta_{\epsilon}\|^{\frac{2}{2^{*}_{s}-2}}\leq K_0$ for some  
$\sigma_0,K_0>0$ independent of $\eps$, in view of Lemma~\ref{Lemma 3.8} and the above equality. Since the map
$$
[0, \|\eta_{\epsilon}\|^{\frac{2}{2^{*}_{s}-2}}]\ni t\mapsto \frac{t^2}{2}\|\eta_{\epsilon}\|^2 - \frac{t^{2^{*}_{s}} }{2^{*}_{s}},
$$
increases, we get for some universal constant $C>0$,
\begin{align*}
 \sup_{\R^+}\psi &\leq 
 \frac{s}{2N}\Big(\|\eta_{\epsilon}\|^2_{X^s}+\int_{\R^N}V(x)\eta_\eps(x,0)^2 \ud x\Big)^{N/s}-C_0C \|\eta_{\epsilon}(x,0))\|^{q}_{L^q} \\
 &\leq \frac{s}{2N}\Big(k_{s}\S(s,N)  + \O(\epsilon^{N-s})+\int_{\R^N}V(x)\eta_\eps(x,0)^2 \ud x\Big)^{N/s}-C_0C \|\eta_{\epsilon}(x,0))\|^{q}_{L^q}.
 \end{align*}
Now, by the elementary inequality $(a+b)^\alpha\leq a^\alpha+\alpha (a+b)^{\alpha-1}b$, $\alpha\geq 1$ and $a,b>0$, we get by \eqref{3.53.1}
\begin{align*}
 \sup_{\R^+}\psi &\leq 
 \frac{s}{2N}(k_{s}\S(s,N))^{N/s}+ \O(\epsilon^{N-s})+C\int_{\R^N}V(x)\eta_\eps(x,0)^2 \ud x-C_0C \|\eta_{\epsilon}(x,0))\|^{q}_{L^q} \\
 & \leq   \frac{s}{2N}(k_{s}\S(s,N))^{N/s}+ \O(\epsilon^{N-s})+C\|\eta_\eps(x,0)\|_{L^2}^2-C_0C \|\eta_{\epsilon}(x,0))\|^{q}_{L^q}
 \end{align*}
\noindent
$\bullet$ In the case $N>2s$, by means of \eqref{3.53.2} and \eqref{stima-q}, we get
\begin{equation*}
 \sup_{\R^+}\psi \leq   \frac{s}{2N}(k_{s}\S(s,N))^{N/s}+ \O(\epsilon^{N-s})+\O(\epsilon^{s})-C_0\O(\eps^{\frac{2N-(N-s)q}{2}}).
 \end{equation*}
 Since ${\frac{2N-(N-s)q}{2}}<s<N-s$, we get the conclusion for $\eps$ sufficiently small. 
\vskip2pt
\noindent 
 $\bullet$ If $N=2s$ and $2<q<2^*_s=4$, by \eqref{3.53.2} and \eqref{stima-q}, we get
 \begin{equation*}
  \sup_{\R^+}\psi \leq   \frac{s}{2N}(k_{s}\S(s,N))^{N/s}+ \O(\epsilon^{s}(1+\log(\epsilon^{-1})) )-C_0\O(\eps^{\frac{2N-sq}{2}}).
  \end{equation*}
 Since it holds
 $$
 \lim_{\eps\to 0}\frac{\eps^{\frac{2N-sq}{2}}}{\epsilon^{s}\log(\epsilon^{-1}) }=+\infty,
 $$
 again can we get the conclusion, for $\eps$ sufficiently small.
 \vskip2pt
 \noindent 
  $\bullet$ If $s<N<2s$ and $\frac{N}{N-s}<q<2^*_s$, by \eqref{3.53.2} and \eqref{stima-q}, we get
  \begin{equation*}
   \sup_{\R^+}\psi \leq   \frac{s}{2N}(k_{s}\S(s,N))^{N/s}+ \O(\epsilon^{N-s})-C_0\O(\eps^{\frac{2N-(N-s)q}{2}}).
   \end{equation*}
   Since ${\frac{2N-(N-s)q}{2}}<N-s$ means $q>\frac{2s}{N-s}(>2)$, we get the conclusion for $\eps$ sufficiently small. 
 \vskip2pt
 \noindent 
  $\bullet$ If $s<N<2s$ and $2<q<\frac{N}{N-s}$, by \eqref{3.53.2} and \eqref{stima-q}, we get
  \begin{equation*}
   \sup_{\R^+}\psi \leq   \frac{s}{2N}(k_{s}\S(s,N))^{N/s}+ \O(\epsilon^{N-s})-C_0\O(\eps^{\frac{(N-s)q}{2}}),
   \end{equation*}
and for $C_0=\eps^{-\vartheta}$ with $\vartheta>\frac{(N-s)(q-2)}{2}$, we get the conclusion. This concludes the proof. 
\end{proof}

\section{Proof of Theorem~\ref{mainsub} completed}
\label{prova1}
In light of Lemma~\ref{MPG}, there exists a {\em Cerami} sequence $\{w_n\}\subset E$ for $J=J_0$. 
From Lemma~\ref{cerami} it follows that $w^-_n\to 0$ in $E$ as $n\to\infty$ and that $\{w_n\}$ is bounded and has 
a nonnegative weak limit $w\in E$. 
By Proposition \ref{converge},  it follows that $w$ is a weak nonnegative solution, to which a weak solution $u \in H^{s/2}(\R^N)$ to \eqref{PS} corresponds. 
We have $u>0$ if $u\neq 0$. In fact, if $u(x_0)=0$ for some $x_0\in\R^N$, then $(-\Delta)^{s/2} u(x_0)=0$
and by the representation formula \cite{nezza}
$$
(-\Delta)^{s/2} u(x)=-\frac{c(N,s/2)}{2}\int_{\R^N}\frac{u(x+y)+u(x-y)-2u(x)}{|x-y|^{N+s}} \ud y,
$$
one obtains that, at $x_0$, that
$$
\int_{\R^N} \frac{u(x_0+y)+u(x_0-y)}{|x_{0}-y|^{N+s}}\ud y=0,
$$
yielding $u=0$, a contradiction. Let us prove that, indeed, $u\neq 0$. We prove that $w=E_s(u)\not\equiv 0$.
In fact, $(w_n)$ converges  to $w$ strongly in $E.$
Indeed, up to a subsequence, $w_n \rightharpoonup w$ in $E$ as $n\to\infty$, and since $J'(w_n)(w_n)=o_n(1),$ we have, again 
by virtue of Proposition \ref{converge},
\begin{align*}
\lim_{n\to \infty} \|w_n\|^2 & =\lim_{n\to\infty}\int_{\R^N}K(x)f(w_n(x,0))w_n(x,0)dx \\
&=\int_{\R^N}K(x)f(w(x,0))w(x,0)dx=\|w\|^2,
\end{align*}
that is, $ w_n \rightarrow w$ in $E.$  Hence $J(w)=c$ and $J'(w)=0,$ this implies that $w\not\equiv 0.$ \qed

\section{Proof of Theorem~\ref{main} completed}
\label{prova2}
\noindent 
In light of Lemma~\ref{MPG}, there exists a {\em Cerami} sequence $\{w_n\}\subset E$ for $J=J_1$. 
From Lemma~\ref{cerami} it follows that $w^-_n\to 0$ in $E$ as $n\to\infty$ and that $\{w_n\}$ is bounded and has 
a nonnegative weak limit $w\in E$. 
By Proposition \ref{converge},  it follows that $w$ is a weak nonnegative solution, to which a weak solution $u \in H^{s/2}(\R^N)$ to \eqref{PS} corresponds. 
We have $u>0$ if $u\neq 0$, arguing as in Section~\ref{prova1}. Let us prove that, indeed, $u\neq 0$. We prove that $w=E_s(u)\not\equiv 0$.
By virtue of \eqref{ceramiS}, we have
\begin{align*}
& \frac{1}{2}\|w_n\|^2 - \int_{\mathbb{R}^N} K(x) F(w(x,0))   \ud x  
- \frac{1}{2^{*}_{s}}\int_{\mathbb{R}^N} w_n^+(x,0)^{2^{*}_{s}}   \ud x=c+o_n(1), \\
& \|w_n\|^2 - \int_{\mathbb{R}^N} K(x) f(w(x,0))w(x,0)  \ud x - \int_{\mathbb{R}^N} w_n^+(x,0)^{2^{*}_{s}} \ud x=o_n(1).
\end{align*}
Suppose, by contradiction, that $w\equiv 0$. Then, we entail
$$
\Big(\frac{1}{2}-\frac{1}{2^{*}_{s}}\Big)\|w_n\|^2=c+o_n(1), 
$$
which combined with $\|w_n\|^2=\|w_n(x,0)\|_{2^*_s}^{2^*_s}+o_n(1)$ as $n\to\infty$ and the Sobolev inequality
$$
\|w_n\|^2\geq \int_{ \R^{N+1}_{+}}k_{s}y^{1-s}|\nabla w|^2 \ud x\ud y\geq k_s \S(s,N) \|w_n(x,0)\|_{2^*_s}^2
$$
implying
$$
c=\lim_n J(w_n)=\Big(\frac{1}{2}-\frac{1}{2^{*}_{s}}\Big)\lim_n \|w_n\|^2\geq \frac{s}{2N}(k_{s}\S(s,N))^{N/s}.
$$
This contradicts Lemma~\ref{crucial}. Hence $w\not\equiv 0$ and the proof is complete. \qed

\bigskip
\bigskip

\bigskip
\bigskip


\begin{thebibliography}{99}

\bibitem{Montenegro} C.O.\ Alves, M. Montenegro, M.A.S. Souto, \textit{Existence of solution for three classes of elliptic problems in
$\R^N$ with zero mass,} J. Differential Equations {\bf 252} (2012), 5735--5750.

\bibitem{AS2012} C.O. Alves, M.A.S. Souto, \textit{ Existence of solutions for a class of elliptic equations in
$\R^N$ with vanishing potentials}, J. Differential Equations {\bf 252} (2012), 5555--5568.

\bibitem{AS} C.O.\ Alves, M.A.S.\ Souto, \textit{ Existence of solutions for a class of nonlinear Schr\"odinger equations
with potential vanishing at infinity,} J.Differential Equations {\bf 254} (2013), 1977--1991.

\bibitem{AFM} A. Ambrosetti, V. Felli, A. Malchiodi, \textit{ Ground states of nonlinear Schr\"odinger equations 
with potentials vanishing at infinity,} J. Eur. Math. Soc. (JEMS) 7 (2005) 117--144.

\bibitem{AR} A. Ambrosetti, P. H. Rabinowitz,\textit{Dual variational methods in critical point theory and applications,} J. Funct. Anal. 
{\bf 14} (1973), 349--381. 

 \bibitem{ambrosetti} A. Ambrosetti, J. Garcia Azorero, I. Peral, \textit{Elliptic variational problems in $ \R^N$  with
 critical growth,} J. Differential Equations {\bf 168} (2000), 10--32.

\bibitem{AW} A. Ambrosetti, Z.-Q. Wang, \textit{Nonlinear Schr\"odinger equations with vanishing and decaying potentials,}
 Differential Integral Equations {\bf 18} (2005), 1321--1332.

\bibitem{BPR} M. Badiale, L. Pisani, S. Rolando, \textit{ Sum of weighted Lebesgue spaces and nonlinear elliptic equations,}
 NoDEA Nonlinear Differential Equations Appl. {\bf 18} (2011), 369--405.

\bibitem{barrios} B.\ Barrios, E.\ Colorado, A.\ de Pablo, U.\ S\'anchez, \textit{On some critical problems for the fractional Laplacian
operator}, J. Differential  Equations {\bf 252} (2012), 613--6162.

\bibitem{BGM} V. Benci, C.R. Grisanti, A.M. Micheletti, \textit{  Existence of solutions for the nonlinear Schr\"odinger
 equation with $V(\infty)=0, $} Progr. Nonlinear Differential Equations Appl. {\bf 66} (2005), 53--65.

\bibitem{BL} H. Berestycki, P.L. Lions, \textit{Nonlinear scalar field equations. I. Existence of a ground state,} Arch. Ration. Mech.
 Anal. {\bf 82} (1983), 313-- 346.

\bibitem{BV} D. Bonheure, J. Van Schaftingen, \textit{Ground states for nonlinear Schr\"odinger equation 
with potential vanishing  at infinity,}  Ann. Mat. Pura Appl. {\bf 189} (2010), 273--301.

\bibitem{colorado} C. Brandle, E. Colorado, A. de Pablo, U. S\'anchez, \textit{A concave-convex elliptic problem involving the fractional 
laplacian}, Proc. Roy. Soc. Edinburgh Sect A {\bf 143} (2013), 39--71.

\bibitem{BN} H. Brezis, L. Nirenberg, \textit{Positive solutions of nonlinear elliptic equations involving critical Sobolev exponents,}  
Comm. Pure Appl. Math. {\bf 36} (1983), 437--477.

\bibitem{cabretan} X. Cabr\'e, J.G. Tan, \textit{Positive solutions of nonlinear problems involving the
square root of the Laplacian}, Adv.\ Math.\  {\bf 224} (2010), 2052--2093.

\bibitem{caffarelli} L. Caffarelli, L. Silvestre, \textit{An extension problems related to the fractional Laplacian,} Comm. PDE 
{\bf 32} (2007), 1245--1260.

\bibitem{capela} A. Capella, J. D\'avila, L. Dupaigne, Y. Sire, \textit{Regularity of radial extremal solutions for 
some non local semilnear equations,} Comm. PDE {\bf 36} (2011),  1353--1384.

\bibitem{Ce} G. Cerami, \textit{ Un criterio de esistenza per i punti critici su varieta ilimitate,} Istit. Lombardo Accad. Sc. Lett. 
Rend. {\bf 112} (1978), 332--336.

\bibitem{CW} X. Chang, Z-Q. Wang, \textit{Ground state of scalar field equations involving a fractional laplacian with general
 nonlinearity,} Nonlinearity {\bf 26} (2013), 479-494.
 
\bibitem{costa-1} 
D.G.\ Costa, C.A.\ Magalhaes, \textit{Variational elliptic problems which are nonquadratic at infinity,}
Nonlinear Anal. {\bf 23} (1994), 1401--1412.

\bibitem{coti} A. Cotsiolis, N.K. Tavoularis, \textit{Best constants for Sobolev inequalities for higher
order fractional derivatives,} J. Math. Anal. Appl. {\bf 295} (2004), 225--236.

\bibitem{nezza} E. Di Nezza, G. Palatucci, E. Valdinoci, \textit{Hitchhiker's guide to the fractional Sobolev spaces,}
Bull. Sci. Math. {\bf 136} (2012), 521--573.

\bibitem{FW}  M.M. Fall, T. Weth, \textit{ Nonexistence results for a class of fractional elliptic boundary value problems,} 
J.\ Funct. Anal. {\bf 263} (2012), 2205--2227.

\bibitem{GM} M. Ghimenti, A.M. Micheletti, \textit{Existence of minimal nodal solutions for the nonlinear Schr\"odinger 
equation with $V(\infty)=0,$} Adv. Diff. Equations {\bf 11} (2006), 1375--1396.

\bibitem{jean}L. Jeanjean, \textit{On the existence of bounded Palais–Smale sequences and application to a Landesman–Lazer type problem set on $ \R^N,$
} Proc. Roy. Soc. Edinburgh Sect. A {\bf 129} (1999), 787--809.

\bibitem{Jin}T. Jin, Y. Li, J. Xiong, \textit{On a fractional Nirenberg problem, part I: blow up analysis and compactness of 
solutions,} J. European Math. Soc. {\bf 16} (2014), 1111--1171.

\bibitem{LW} Y. Liu, Z.-Q. Wang, J. Zhang, \textit{Ground states of nonlinear Schr\"odinger equations with potentials,}
 Ann. Inst. H. Poincare Anal.Non Lin\'eaire {\bf 23} (2006), 829--837.

\bibitem{LWa} Z. Liu, Z-Q. Wang, \textit{On the Ambrosetti-Rabinowitz superlinear condition,} Adv. Nonlinear Stud. {\bf 4} (2004), 563--574.

\bibitem{olimpio} O.H. Miyagaki, \textit{On a class of semilinear elliptic problems in $\R^N$ with critical growth,} Nonlinear Analysis {\bf 29}
(1997), 773--781.

\bibitem{Simone} 
S. Secchi, 
\textit{ Ground state solutions for nonlinear fractional Schr\"odinger equations in $\R^N$,}  J. Math. Phys. {\bf 54} (2013), 031501-17 pages.

\bibitem{servadei2} R. Servadei, E. Valdinoci, \textit{A Brezis-Nirenberg result for non-local critical equations in low dimension},
Comm. Pure Appl. Anal. {\bf 12} (2013), 2445--2464.

\bibitem{servadei}  R.  Servadei, E. Valdinoci, \textit{On the spectrum of two different fractional operators},
 Proc. Roy. Soc. Edinburgh Sect. A {\bf 144} (2014), 831--855.

\bibitem{SZY} X. Shang, J. Zhang, Y. Yang, \textit{On fractional Sch\"dinger equation in $\R^N$ with critical growth,} J. Math. Phys. {\bf
54} (2013), 121502-19 pages.

\bibitem{zou} M. Schechter, W. Zou,\textit{ Superlinear problems,} Pacific J. Math. {\bf 214} (2004), 145--160.

\bibitem{St} W.\ Strauss, \textit{ Existence of solitary waves in higher dimensions,} Comm. Math. Phys. {\bf 55} (1977), 149--162.

\bibitem{talenti} G.\ Talenti, \textit{Best constants in Sobolev inequality,} Ann. Mat. Pura Appl. {\bf 110} (1976), 353--372.

\bibitem{tan} J.\ Tan, \textit{ Positive solutions for non local elliptic problems,} Disc. Cont. Dyn. Syst. {\bf 33} (2013), 837--859.
 
\bibitem{tanhalf} J.\ Tan,\textit{ The Brezis-Nirenberg type problem involving the square root of the laplacian,} 
Calc. Var. Partial Differential Equations {\bf 42} (2011), 21--41.

\bibitem{willem} M. Willem, \textit{Minimax theorems}, Birkh\"auser, Boston, 1996.

\end{thebibliography}
\end{document}